\numberwithin{equation}{section}
\numberwithin{figure}{section}
\theoremstyle{plain}
\newtheorem{theorem}[equation]{Theorem}
\newtheorem{corollary}[equation]{Corollary}
\newtheorem{lemma}[equation]{Lemma}
\newtheorem{proposition}[equation]{Proposition}
\theoremstyle{definition}
\newtheorem{definition}[equation]{Definition}
\DeclareMathOperator{\Num}{Num}
\DeclareMathOperator{\GL}{GL}
\begin{document}

\bibliographystyle{alpha}

\newcommand{\ZZ}{\mathbb{Z}}
\newcommand{\QQ}{\mathbb{Q}}
\newcommand{\RR}{\mathbb{R}}
\newcommand{\NN}{\mathbb{N}}
\newcommand{\CC}{\mathbb{C}}

\newcommand{\Der}{\ensuremath{\operatorname{Der}}}
\newcommand{\Gr}{\ensuremath{\operatorname{Gr}}}
\newcommand{\id}{\ensuremath{\operatorname{Id}}}
\newcommand{\pder}[1]{\ensuremath{\frac{\partial}{\partial #1}}}
\newcommand{\ddx}[1]{\ensuremath{\frac{d}{d #1}}}

\newcommand{\es}{exact sequence}
\newcommand{\ses}{short \es}
\newcommand{\thcr}{twisted homogeneous coordinate ring}
\newcommand{\nthcr}{twisted homogeneous coordinate ring}%new version of it--change if think of new term
\newcommand{\ann}{\ensuremath{\operatorname{ann}}}
\newcommand{\supp}{\ensuremath{\operatorname{Supp}}}
\newcommand{\im}{\ensuremath{\operatorname{Im}}}
\newcommand{\Hom}{\ensuremath{\operatorname{Hom}}}
\newcommand{\HS}{\ensuremath{\operatorname{HS}}}
\newcommand{\gkdim}{\ensuremath{\operatorname{GKdim}}}
\newcommand{\Kdim}{\ensuremath{\operatorname{Kdim}}}
\newcommand{\Proj}{\ensuremath{\operatorname{Proj}}}
\newcommand{\pr}{\ensuremath{\operatorname{pr}}}
\newcommand{\Pic}{\ensuremath{\operatorname{Pic}}}
\newcommand{\Qgr}{\ensuremath{\operatorname{Qgr}}}
\newcommand{\Tors}{\ensuremath{\operatorname{Tors}}}
\newcommand{\qch}{\ensuremath{\operatorname{qch}}}
\newcommand{\coker}{\ensuremath{\operatorname{coker}}}
\newcommand{\Qch}{\ensuremath{\operatorname{Qch}}}
\newcommand{\GrMod}{\ensuremath{\operatorname{GrMod}}}
\newcommand{\Mod}{\ensuremath{\operatorname{Mod}}}
\newcommand{\Spec}{\ensuremath{\operatorname{Spec}}}
\newcommand{\Char}{\ensuremath{\operatorname{char}}}
\newcommand{\length}{\ensuremath{\operatorname{length}}}
\newcommand{\depth}{\ensuremath{\operatorname{depth}}}
\newcommand{\height}{\ensuremath{\operatorname{height}}}
\newcommand{\dlim}{\ensuremath{\operatorname{\displaystyle \lim_{\rightarrow}}}}
\newcommand{\onto}{\twoheadrightarrow}
\newcommand{\into}{\hookrightarrow}
\newcommand{\mc}[1]{\ensuremath{\mathcal{#1}}}
\newcommand{\mb}[1]{\ensuremath{\mathbb{#1}}}
\newcommand{\Oh}{\ensuremath{\mathcal{O}}}

\begin{frontmatter}

\title{Noncommutative ampleness from finite endomorphisms}
%\tnotetext[mytitlenote]{Fully documented templates are available in the elsarticle package on \href{http://www.ctan.org/tex-archive/macros/latex/contrib/elsarticle}{CTAN}.}

%% Group authors per affiliation:
\author{D.~ S.~ Keeler}
\address{Dept. of Mathematics, Miami University, Oxford, OH 45056}
\ead{keelerds@miamioh.edu}
%\fntext[myfootnote]{Since 1880.}

\author{K.~ Retert}
\address{Dept. of Mathematics, Miami University, Oxford, OH 45056}
\ead{retertk@gmail.com}

%% or include affiliations in footnotes:
%\author[mymainaddress,mysecondaryaddress]{Elsevier Inc}
%\ead[url]{www.elsevier.com}
%
%\author[mysecondaryaddress]{Global Customer Service\corref{mycorrespondingauthor}}
%\cortext[mycorrespondingauthor]{Corresponding author}
%\ead{support@elsevier.com}
%
%\address[mymainaddress]{1600 John F Kennedy Boulevard, Philadelphia}
%\address[mysecondaryaddress]{360 Park Avenue South, New York}

\begin{abstract}
Let $X$ be a projective integral scheme with endomorphism
$\sigma$, where $\sigma$ is finite, but not an automorphism. 
We examine noncommutative ampleness of bimodules defined by $\sigma$.
In contrast to the automorphism case, one-sided ampleness is possible.
We also find that rings and bimodule algebras associated
with $\sigma$ are not noetherian.
\end{abstract}

\begin{keyword}
Ampleness \sep vanishing theorem \sep finite endomorphism \sep twisted
homogeneous coordinate ring
%\MSC[2010] 00-01\sep  99-00
\end{keyword}

\end{frontmatter}

%\linenumbers

\section{Introduction}

Let $X$ be a projective integral scheme over a field $k$. A homogeneous
coordinate ring $R$ can be built from global sections of $\mc{L}^{\otimes n}$,
where $\mc{L}$ is an ample invertible sheaf. The sheaf $\mc{L}$
can be used to prove the Serre Correspondence Theorem, showing
a category equivalence between the tails of finitely generated graded $R$-modules
and coherent sheaves on $X$. 

In \cite{av}, Artin and Van den Bergh 
defined a twisted homogeneous coordinate ring $B$ by using an
automorphism $\sigma$ of $X$ to twist the usual multiplication.
Via a noncommutative definition of ampleness (see Definition~\ref{def: sigma-ample}), 
they again showed
a category equivalence between the tails of graded (right) $B$-modules
and coherent sheaves on $X$.

However, this definition was not clearly equivalent on the left and right.
In \cite{k}, the first author showed that the left and right definitions
are equivalent. A key technique was to study the behavior of $\sigma$
on the numerical equivalence classes of divisors.

In \cite{vdb}, Van den Bergh generalized these definitions to include the
possibility that $\sigma$ was not an automorphism, but only a finite endomorphism.
In this paper, we examine this noncommutative ampleness in the general finite
endomorphism case. Unlike the automorphism case, one never has ampleness
on the left, but it is possible to have ampleness on the right.
In Corollary~\ref{cor: not left ample} and Proposition~\ref{prop: ample eigenvector}
we have (in simplified form)
\begin{theorem}
Let $X$ be a regular projective integral scheme with finite
endomorphism $\sigma$ and invertible sheaf $\mc{L}$. 
Suppose $\sigma$ is not an automorphism. Then
the sequence of $\Oh_X$-bimodules defined by $\sigma$ and $\mc{L}$
is not left ample.

However, if $\mc{L}$ is ample (in the commutative sense)
and $\sigma^*\mc{L} \cong \mc{L}^{\otimes r}$  for some $r \in \ZZ$,
then the sequence is right ample.
\end{theorem}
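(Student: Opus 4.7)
The plan is to handle the two halves separately, since they exploit opposite consequences of $\sigma$ being a non-invertible finite endomorphism.

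For the right-ample half, the key reduction is that the eigenvector hypothesis collapses the bimodule product into a genuine tensor power. Under $\sigma^*\mathcal{L}\cong\mathcal{L}^{\otimes r}$, iterating gives $(\sigma^k)^*\mathcal{L}\cong\mathcal{L}^{\otimes r^k}$, so the underlying invertible sheaf of the $n$-fold bimodule product
\[
\mathcal{L}_n \;=\; \mathcal{L}\otimes\sigma^*\mathcal{L}\otimes\cdots\otimes(\sigma^{n-1})^*\mathcal{L}
\]
is isomorphic to $\mathcal{L}^{\otimes s_n}$ with $s_n = 1+r+\cdots+r^{n-1}$. Since $\sigma$ is a finite non-automorphism of degree $d>1$ on the $N$-dimensional regular $X$, the projection formula applied to $(\sigma^*\mathcal{L})^N$ forces $r^N=d$, so $r\ge 2$ and $s_n\to\infty$. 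Right $\sigma$-ampleness in the sense of Definition~\ref{def: sigma-ample} then reduces, for each coherent $\mathcal{F}$, to the usual Serre vanishing and global generation statements applied to $\mathcal{F}\otimes\mathcal{L}^{\otimes s_n}$, which are immediate from ampleness of $\mathcal{L}$.

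For the non-left-ample half, the plan is to observe that the left action of the graph bimodule uses $\sigma_*$ rather than $\sigma^*$. Concretely, the $n$-fold left tensor of an invertible sheaf by the bimodule $(\mathcal{L},\sigma)$ is naturally of the form $(\sigma^n)_*(\text{line bundle})$, hence a coherent sheaf of rank $d^n$ on $X$. This exponential rank growth is incompatible with the uniform Serre-type vanishing demanded by left ampleness: comparing, for a polarization $\mathcal{H}$, the leading coefficient of the Hilbert polynomial of the $n$-fold left twist (which grows like $d^n$ in $n$) against the ``effective'' polarization contribution from the underlying line bundle factor (which grows only polynomially in $n$) produces a coherent test sheaf whose higher cohomology fails to vanish after left twisting, for arbitrarily large $n$. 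Alternatively, and more conceptually, the numerical-class framework of \cite{k} can be imported: left ampleness would force a quasi-unipotent action of $\sigma$ on $\Num(X)_{\RR}$, but the spectral radius of $\sigma^*$ is at least $d^{1/N}>1$, giving a direct contradiction.

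The main obstacle will be this left-side argument. The right half is essentially immediate once the eigenvector hypothesis has converted the bimodule product into $\mathcal{L}^{\otimes s_n}$, whereas the failure of left ampleness has to hold for \emph{every} invertible $\mathcal{L}$, not merely the eigenvector case, and must be extracted from a numerical invariant that survives the bimodule formalism. The cleanest route is probably to set up the left analogue of Definition~\ref{def: sigma-ample} in bimodule terms, read off the induced operator on $\Num(X)_{\RR}$, and invoke the spectral radius obstruction above; verifying that this is the correct invariant (and that no accidental cohomological cancellations rescue left ampleness) is where the real work lies.
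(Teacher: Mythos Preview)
Your right-ample half is correct and essentially matches the paper's Proposition~\ref{prop: ample eigenvector}: the eigenvector hypothesis collapses $\mc{L}_n$ to $\mc{L}^{\otimes s_n}$ with $s_n\to\infty$, and Serre vanishing finishes it. The paper phrases this slightly more abstractly via the ample cone in $\Num(X)_\RR$, but the content is the same.

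The left half has a genuine gap. Your Option~1 (rank growth of $(\sigma^n)_*$) does not obstruct higher-cohomology vanishing by itself: a rank-$d^n$ locally free sheaf can perfectly well have vanishing $H^q$ for $q>0$, so the Hilbert-polynomial comparison you sketch does not produce a failing test sheaf. Your Option~2 points in the right direction---the spectral radius $r>1$ is indeed the obstruction---but the statement ``left ampleness would force a quasi-unipotent action'' is exactly what has to be \emph{proved} here, and the results of \cite{k} do not apply because $\sigma$ is not an automorphism and the left bimodule tensor is not governed by an invertible sheaf.

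What you are missing is the two-step mechanism the paper uses. First (Proposition~\ref{prop:ample sequence implies ample sheaf}), one reduces left ampleness of $(\mc{B}_n)$ to the statement that the \emph{invertible} sheaves $\mc{L}_n\otimes(\sigma^n)^*\mc{H}$ are ample for every invertible $\mc{H}$ and all $n\gg 0$; this reduction goes through the projection formula $\sigma^n_*(\mc{L}_n)\otimes\mc{H}\cong\sigma^n_*(\mc{L}_n\otimes(\sigma^n)^*\mc{H})$ and the fact that ampleness of a pushforward under a finite map detects ampleness upstairs. Second (Lemmas~\ref{lem: P^mD growth}--\ref{lem: sum growth} and Proposition~\ref{prop: non left ample divisors}), one compares intersection numbers on a well-chosen curve $C$: there exist constants with
\[
(\Delta_m . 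C)\le c_3\,m^j r^m,\qquad ((\sigma^*)^m H . C)\ge c_2\,m^j r^m,
\]
where $\Delta_m=D+\sigma^*D+\dots+(\sigma^*)^{m-1}D$ and $H$ is ample. The point is that the partial sum $\Delta_m$ grows at the \emph{same} order $m^jr^m$ as the single term $(\sigma^*)^mH$ (not one order higher, as it would if $r=1$), so after replacing $H$ by an integer multiple one forces $c_2>c_3$ and hence $(\Delta_m-(\sigma^*)^mH.C)<0$ for all $m>0$. This contradicts ampleness of $\mc{L}_m\otimes(\sigma^m)^*\mc{H}^{-1}$. Your proposal identifies $r>1$ but does not supply either the reduction step or this growth-rate comparison, which is where the argument actually lives.
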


We also show that in case the sequence is right ample, the resulting
twisted homogeneous coordinate ring is not noetherian, as is
the bimodule algebra which defines the ring. See Theorem~\ref{thm: nonnoetherian ring}
and Corollary~\ref{cor: nonnoetherian bimodule algebra}. 
As a specific example, in Section~\ref{sec:  frobenius example}
we examine the case of $X=\mb{P}^m$ and $\sigma$ the relative Frobenius
endomorphism. We find
\begin{proposition}%\label{prop: Frobenius example}
Let $X=\mb{P}^m$ over a perfect field of characteristic $p > 0$. 
Let $f$ be the relative Frobenius endomorphism. 
Then the twisted homogeneous coordinate ring 
${F = \oplus \Gamma\left(\Oh\left(\frac{p^n-1}{p-1}\right)\right)}$
is not finitely generated, unless $p=2$ and $m=1$. In that case, $F$ is generated
by $F_1 = \Gamma(\Oh(1))$, but $F$ is not noetherian. 
\end{proposition}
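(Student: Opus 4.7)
The plan is to make the twisted multiplication on $F$ completely explicit as a monomial operation on the polynomial ring, reduce decomposability of a monomial to a base-$p$ digit condition on its coordinate-wise exponents, and then write down explicit essential monomials in every degree.

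Since the relative Frobenius satisfies $f^{*}\Oh(1)=\Oh(p)$, one has $F_n=S_{d_n}$ with $S=k[x_0,\dots,x_m]$, and the twisted product of $a\in F_{n_1}$, $b\in F_{n_2}$ is $a(x)\cdot b(x^{p^{n_1}})$. In particular a product of monomials is a monomial, namely $x^{e_1}\ast\cdots\ast x^{e_k}=x^{E}$ with $E=\sum_i p^{n_1+\cdots+n_{i-1}}e_i$. Within a fixed block decomposition $(n_1,\dots,n_k)$ the tuple $(e_1,\dots,e_k)$ can be recovered from the base-$p$ digits of $E$, so distinct factor tuples give distinct product monomials. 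Consequently a monomial $x^E\in F_n$ lies in $F_{n_1}\ast\cdots\ast F_{n_k}$ if and only if, for each block $i$, the integer $E_j^{(i)}$ formed by the base-$p$ digits of $E_j$ in positions $[n_1+\cdots+n_{i-1},\,n_1+\cdots+n_i-1]$ satisfies $\sum_j E_j^{(i)}=d_{n_i}$.

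For $(p,m)=(2,1)$, any $x_0^{a}x_1^{b}\in F_n$ has $a+b=2^n-1$, which forces the base-$2$ $1$-positions of $a$ and $b$ to partition $\{0,\dots,n-1\}$. Taking all $n_i=1$ in the criterion, the block conditions $a_i+b_i=1=d_1$ all hold, so $x_0^{a}x_1^{b}\in(F_1)^n$ and $F$ is generated by $F_1$. Non-noetherianness then follows from Theorem~\ref{thm: nonnoetherian ring}: $\Oh(1)$ is ample and $f^{*}\Oh(1)=\Oh(2)$, so Proposition~\ref{prop: ample eigenvector} gives right ampleness, while $f$ is a finite non-automorphism.

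For $(p,m)\neq(2,1)$, I will exhibit in every $F_n$ ($n\geq 2$) a monomial whose first-block condition fails for every $n_1<n$; such a monomial cannot lie in any $F_i\ast F_{n-i}$ and is therefore a minimal generator in degree $n$, forcing minimal generators in infinitely many degrees. When $p=2$ and $m\geq 2$, take $x_0^{d_{n-1}}x_1^{d_{n-1}}x_2$: the low $n_1$ binary digits of $d_{n-1}=2^{n-1}-1$ yield $E_0^{(1)}=E_1^{(1)}=d_{n_1}$ and $E_2^{(1)}=1$, giving sum $2d_{n_1}+1=d_{n_1+1}\neq d_{n_1}$. When $p\geq 3$ and $m\geq 1$, take $x_0^{d_{n-1}+1}x_1^{p^{n-1}-1}$: a short mod-$p^{n_1}$ computation gives $E_0^{(1)}=d_{n_1}+1$ and $E_1^{(1)}=p^{n_1}-1$, summing to $d_{n_1}+p^{n_1}=d_{n_1+1}\neq d_{n_1}$.

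The heart of the argument is the digit criterion, which rests on the observation that distinct monomial products remain distinct monomials, since no cancellation is available in $S$. The main obstacle I anticipate is spotting the right essential monomials in the non-finitely-generated case: I would identify them by hand-computing $F_2$ and $F_3$ in each characteristic first, after which the verification that the first-block condition fails reduces to a one-line modular calculation.
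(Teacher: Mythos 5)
Your argument is correct, and its core is the same as the paper's: both reduce everything to the fact that the twisted product of monomials $x^{e_1}\ast x^{e_2}$ (with $x^{e_1}\in F_a$) has exponent vector $e_1+p^{a}e_2$, and then run a congruence-mod-$p^{a}$ argument on a well-chosen monomial in each degree $n\ge 2$. What you do differently is to package this once and for all as a base-$p$ digit criterion for membership in $F_{n_1}\ast\cdots\ast F_{n_k}$; this is legitimate because every coordinate of an exponent vector in $F_{n_i}$ is at most $d_{n_i}\le p^{n_i}-1$, so no carries cross block boundaries, and because monomials form a basis, so a monomial lying in a span of products must itself be a product of monomials. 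The paper instead argues each case directly: for $p>2$ it takes $z$ with $\deg_{x_0}z=p^{n-1}-1$ and derives the impossible $c\equiv -1\pmod{p^a}$ with $c\le \frac{p^a-1}{p-1}$; for $p=2$, $m\ge 2$ it uses $x_0^{2^{n-1}-1}x_1^{2i+1}x_2^{2j+1}$ (your $x_0^{d_{n-1}}x_1^{d_{n-1}}x_2$ is the special case $j=0$); and for $p=2$, $m=1$ it shows $F_1\cdot F_n=F_{n+1}$ by parity of exponents, which is the same fact as your carry-free binary partition of $2^n-1$. So your criterion buys uniformity rather than new content. The one genuinely different step is non-noetherianity in the $(p,m)=(2,1)$ case: you route it through Proposition~\ref{prop: ample eigenvector} and Theorem~\ref{thm: nonnoetherian ring} (valid: $\Oh(1)$ is an ample eigenvector of the numerical action, and the relative Frobenius is a finite endomorphism of degree $2>1$ of the normal scheme $\mb{P}^1$, hence not an automorphism), whereas the paper simply notes that $\dim_k F_n=\binom{e_n+m}{m}$ grows exponentially and cites Stephenson--Zhang directly; both routes ultimately rest on the same growth theorem, the paper's being the more self-contained computation.
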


The proofs of category equivalences in \cite{av,vdb} rely on noetherian
conditions. Thus we have not generalized those results in this paper.

\section{Finite Endomorphism Properties}

Throughout this paper, $X$ will be an integral scheme of finite-type
over a field $k$. Often we will also assume $X$ is projective, normal,
or regular, in which case we will make these assumptions clear.

We begin by verifying some basic properties of a finite endomorphism
$f$. Presumably these are all well-known.

\begin{lemma}\label{lem:  integral, finite=>inj}
Let $X$ be an integral scheme.  Let $f:X\to X$ be a finite morphism.
Then for any open affine set $U$, the corresponding
map of rings $f|_U ^{\#}$ is injective.  In other words, 
when $\Spec A=f^{-1}(\Spec B)$ we may consider $B$ as a subring
of $A$ (up to isomorphism).
\end{lemma}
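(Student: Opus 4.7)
The plan is to reduce everything to an assertion about the ring map $f|_U^{\#}: B \to A$ induced on sections, and then use the dimension theory of finite extensions of finitely generated $k$-algebras.

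First, I would observe that since $f$ is finite it is in particular affine, so for $U = \Spec B$ the preimage $f^{-1}(U)$ is indeed an affine scheme $\Spec A$, and the ring map $f|_U^{\#}: B \to A$ makes $A$ into a finitely generated $B$-module. In particular, $A$ is integral over the image $f|_U^{\#}(B) \cong B/\ker f|_U^{\#}$.

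Next I would use the integrality hypothesis. Both $\Spec A$ and $\Spec B$ are nonempty open subschemes of the integral scheme $X$, hence are themselves integral, so $A$ and $B$ are integral domains. Moreover, because $X$ is of finite type over $k$, each nonempty open affine of $X$ has Krull dimension equal to $\dim X$; in particular $\dim A = \dim B = \dim X$. Since $A$ is a domain, the kernel $\mathfrak{p} := \ker f|_U^{\#}$ is a prime ideal of $B$, and we obtain an injection $B/\mathfrak{p} \hookrightarrow A$ realising $A$ as an integral extension of $B/\mathfrak{p}$.

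The heart of the argument is then a dimension count. Integral extensions preserve Krull dimension, so $\dim A = \dim(B/\mathfrak{p})$. On the other hand, $B$ is a finitely generated $k$-algebra that is a domain, hence catenary, and for any nonzero prime $\mathfrak{p}$ of such a ring one has $\dim(B/\mathfrak{p}) = \dim B - \height(\mathfrak{p}) \leq \dim B - 1$. Combining this with $\dim A = \dim B$ forces $\height(\mathfrak{p}) = 0$, and since $B$ is a domain this means $\mathfrak{p} = 0$. Thus $f|_U^{\#}$ is injective, and we may regard $B$ as a subring of $A$.

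The step I expect to require the most care is the dimension equality $\dim A = \dim X = \dim B$ for \emph{arbitrary} nonempty open affines; this is where the finite-type hypothesis on $X$ enters, via the fact that $X$ is catenary and equidimensional. Everything else is then a clean application of going up for finite ring extensions. (A minor degenerate case, $f^{-1}(U) = \emptyset$, makes $A = 0$ and the statement vacuous; I would note this and dispatch it immediately.)
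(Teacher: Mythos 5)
Your argument is correct and is essentially the paper's own proof: both reduce to the dimension count $\dim A = \dim(B/\ker f|_U^{\#}) = \dim B$, using integrality of the finite ring extension together with the fact that every nonempty open subset of the finite-type integral scheme $X$ has dimension $\dim X$, and then conclude $\ker f|_U^{\#} = 0$ because $B$ is a domain (you make the catenarity/height step explicit where the paper cites Eisenbud's Proposition~9.2). One small quibble: if $f^{-1}(U)$ were empty then $A=0$ and the injectivity claim would be false rather than vacuous, but this case never arises since a finite endomorphism of an integral scheme is surjective (the paper's subsequent surjectivity lemma), or alternatively because your own dimension count already rules it out.
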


\begin{proof}
Since $f$ is finite, 
for any open affine set $U=\Spec B$, the pre-image $f^{-1}(U)=\Spec A$
where $A$ is a $B$-algebra which is finitely generated as a $B$-module.
Now $A$ is a $B$-module via the ring homomorphism $\phi=f|_U ^{\#}$
from $B$ to $A$ corresponding to $f$.

Thus, the map $\phi:B\to A$ makes $A$ integral
over $B$ (technically over $\phi(B)$).  Then by 
\cite[Proposition~9.2]{e}, $\dim A=\dim (B/\ker\phi)$.  
As $X$ is irreducible (being integral),
every nonempty open subset of $X$ is dense in
$X$.  Hence the dimension of every nonempty open subset of $X$ 
equals the dimension of $X$.  In particular, $\dim U=\dim f^{-1}(U)$
and so $\dim B=\dim (B/\ker \phi)$.  Since $X$ is integral, $B$ is
a domain (and $\langle 0\rangle$ is prime) and so the only way to have 
$\dim B=\dim B/\ker \phi$
is for $\ker \phi=\langle 0\rangle$.  Thus $\phi$ must be injective
and we may consider $B$ a subset of $A$.  
\end{proof}

\begin{lemma}\label{lem:  int proj, finite=>surj}
\cite[Theorem~4, p.~61]{sh1}
Let $X$ be an integral scheme with $f$ a finite
endomorphism.  Then $f$ is surjective. \qed
\end{lemma}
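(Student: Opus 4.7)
The plan is to reduce to the affine setting and invoke the going-up theorem for integral ring extensions, making direct use of Lemma~\ref{lem:  integral, finite=>inj}.

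First I would cover $X$ by open affines $U = \Spec B$. Because $f$ is finite, $f^{-1}(U) = \Spec A$ for some $B$-algebra $A$ that is finitely generated as a $B$-module via the ring map $\phi = f|_U^{\#}$. By Lemma~\ref{lem:  integral, finite=>inj}, $\phi$ is injective, so I may regard $B \subseteq A$, and this inclusion is integral (since $A$ is even a finite $B$-module). By the going-up theorem, every prime ideal of $B$ is the contraction of some prime of $A$; equivalently, the induced map $\Spec A \to \Spec B$ is surjective. But this map is precisely the restriction $f|_{f^{-1}(U)} \colon f^{-1}(U) \to U$, so every point of $U$ lies in the image of $f$. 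Letting $U$ range over an affine cover of $X$ shows that $f$ itself is surjective.

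There is essentially no real obstacle once the preceding lemma is available; the substantive ingredient is just the classical surjectivity of $\Spec A \to \Spec B$ for an integral extension. An alternative route, using scheme-theoretic input in place of commutative algebra, would be to observe that $f$ is proper (being finite), so $f(X)$ is closed in $X$; since $f$ has finite fibres and $X$ is irreducible, $\dim f(X) = \dim X$; but $X$ is of finite type over $k$ and hence has finite dimension, so any proper closed irreducible subset has strictly smaller dimension, forcing $f(X) = X$.
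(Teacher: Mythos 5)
Your proposal is correct, but note that the paper does not prove this lemma at all: it is quoted directly from Shafarevich \cite[Theorem~4, p.~61]{sh1}, so you have supplied an argument where the authors only cite one. Your argument is the standard one and is essentially what the cited source does: reduce to an affine open $U=\Spec B$ with $f^{-1}(U)=\Spec A$, use the injectivity of $\phi = f|_U^{\#}$ from Lemma~\ref{lem:  integral, finite=>inj} to view $B\subseteq A$ as an integral extension, and conclude that $\Spec A\to\Spec B$ is surjective. The only quibble is terminological: the result you need is the lying-over theorem (every prime of $B$ is a contraction of a prime of $A$) rather than going-up proper, though the two are usually packaged together; and it is worth emphasizing that the injectivity supplied by Lemma~\ref{lem:  integral, finite=>inj} is genuinely needed here, since for a non-injective $\phi$ the image of $\Spec A\to\Spec B$ is only $V(\ker\phi)$. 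Your alternative dimension-theoretic argument also works, since $X$ is of finite type over $k$, so $f(X)$ is closed and irreducible, finiteness of the fibers gives $\dim f(X)=\dim X$, and a proper closed irreducible subset of an integral finite-type $k$-scheme has strictly smaller dimension; this route trades the commutative algebra of lying over for properness plus dimension theory, at the cost of being slightly less elementary.
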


\begin{corollary}\label{cor: int proj, finite=>open}
Let $X$ be a normal, integral scheme.  Let $f:X\to X$ be a finite morphism.
Then $f$ is an open morphism.
\end{corollary}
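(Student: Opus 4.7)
The plan is to reduce to the affine setting, apply the classical going-down theorem (whose hypotheses match our setup precisely because $X$ is normal and integral), and then deduce openness by a topological argument involving constructibility and generization stability.

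First I would reduce to affine charts. For any point $y \in X$, pick an affine open neighborhood $\Spec B \subset X$. Lemma~\ref{lem:  integral, finite=>inj} provides $f^{-1}(\Spec B) = \Spec A$ together with an injective integral extension $B \hookrightarrow A$. Normality of $X$ makes $B$ a normal (integrally closed) domain, and integrality of $X$ forces $A$ to be a domain. These are precisely the hypotheses of the classical going-down theorem (see, e.g., Matsumura, \emph{Commutative Ring Theory}, Theorem~9.4, or Atiyah--Macdonald, Theorem~5.16), so going-down holds for each such $B \hookrightarrow A$.

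Translated back to schemes, going-down yields the generization-lifting property: whenever $y^*$ is a generization of $y = f(x)$ in $X$, there exists $x^* \in f^{-1}(y^*)$ that is a generization of $x$ (both points may be taken inside a single affine chart containing $y$, hence one containing $y^*$ as well). Given any open $V \subset X$, I would then show $f(V)$ is open in two steps. First, by Chevalley's theorem applied to the finite-type composition $V \hookrightarrow X \xrightarrow{f} X$ over the Noetherian base $X$, the image $f(V)$ is a constructible subset of $X$. Second, $f(V)$ is stable under generization: for $y = f(x)$ with $x \in V$ and any generization $y^*$ of $y$, the lifted generization $x^*$ must lie in $V$, since $V$ is an open neighborhood of $x$ and $x \in \overline{\{x^*\}}$ forces every open neighborhood of $x$ to contain $x^*$. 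Hence $y^* = f(x^*) \in f(V)$. A constructible, generization-stable subset of a Noetherian scheme is open, so $f(V)$ is open.

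The main delicate point is simply tracking the direction of specialization versus generization (easy to write backwards); once that bookkeeping is settled, every component of the argument is standard. It is worth noting that normality of $X$ is used exactly once, and essentially: without it, the going-down theorem does not apply, and openness can fail.
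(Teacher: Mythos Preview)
Your argument is correct. The paper's proof is a one-line citation of Danilov's theorem that any finite dominant morphism $f:X\to Y$ with $Y$ normal is open; dominance is supplied implicitly by the preceding Lemma~\ref{lem:  int proj, finite=>surj}. You instead reprove that cited result from scratch: injectivity of the affine ring maps (Lemma~\ref{lem:  integral, finite=>inj}) places you in the hypotheses of the Cohen--Seidenberg going-down theorem, which you translate into generization lifting and then combine with Chevalley constructibility to conclude openness. Your route is more self-contained and makes the role of normality explicit (it is exactly what powers going-down); the paper's route is shorter but defers all the content to the reference. One small remark: your appeals to Chevalley and to ``constructible plus generization-stable implies open'' require $X$ to be Noetherian, which follows from the paper's standing hypothesis that $X$ is of finite type over a field, though the corollary itself does not restate this.
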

\begin{proof}
By \cite[2.8~Theorem, p.~220]{d},
a finite dominant morphism $f:X\to Y$, where $Y$ is normal, is open.
\end{proof}

The following connection between the degree of $f$ and the possible
invertibility of $\sigma$ will be important for our study of
ampleness. See \cite[p.~299]{kl} for the definition of degree
and its basic properties.

\begin{lemma}\label{lem:  normal, int, finite deg 1=>auto}
Let $X$ be a normal, integral scheme and $f:X\to X$ a finite morphism
of degree~1.  Then $f$ is an automorphism.
\end{lemma}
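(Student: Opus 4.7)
The plan is to exploit that $\deg f = 1$ forces the induced map $f^{*}$ on function fields to be an isomorphism, and then to leverage normality of $X$ to promote the affine-local inclusion of Lemma~\ref{lem: integral, finite=>inj} into an equality. Concretely, since $f$ is finite, $K(X)$ is a finite extension of $f^{*}K(X)$ of degree $\deg f = 1$, so $f^{*}\colon K(X)\to K(X)$ is surjective and hence an automorphism of $K(X)$.

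Next I would work on an affine open cover. Fix an affine open $U=\Spec B\subseteq X$; because $f$ is finite (hence affine), $f^{-1}(U)=\Spec A$ is also affine, and $A$ is a finitely generated $B$-module via the injection $\phi=f|_U^{\#}\colon B\to A$ supplied by Lemma~\ref{lem: integral, finite=>inj}. In particular, $A$ is integral over $\phi(B)=f^{*}(B)$, all viewed as subrings of $K(X)$.

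The decisive step is to use normality: $B$ is integrally closed in $K(X)$, and applying the field automorphism $f^{*}$ (which preserves integral closure) shows that $f^{*}(B)$ is also integrally closed in $K(X)$. Since $A\subseteq K(X)$ is integral over $f^{*}(B)$, we conclude $A\subseteq f^{*}(B)=\phi(B)$. The reverse inclusion $\phi(B)\subseteq A$ is automatic, so $\phi$ is an isomorphism. Because this holds for every affine open $U$ in a cover of $X$, and because $f$ is surjective by Lemma~\ref{lem:  int proj, finite=>surj}, $f$ is an isomorphism of schemes.

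The main obstacle is bookkeeping, not content: one must be careful that $B$ sits inside $K(X)$ in two a priori different ways---as regular functions on $U$ and as the image $f^{*}(B)$---and verify that normality is genuinely transported by the field automorphism $f^{*}$ so that the integral-closure argument closes. Once that identification is straight, the proof is essentially a one-line application of the fact that a finite birational morphism to a normal scheme is an isomorphism.
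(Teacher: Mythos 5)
Your proof is correct and follows essentially the same route as the paper's: degree~1 gives that $f$ is birational (you phrase this as $f^{*}$ being an automorphism of $K(X)$), and then on each affine open the finiteness of $A$ over $\phi(B)$ together with normality (integral closedness of $B$, transported along $f^{*}$) forces $A=\phi(B)$, so $f$ is an isomorphism. Your extra bookkeeping distinguishing $B$ from $f^{*}(B)$ inside $K(X)$ just makes explicit the identification the paper performs when it says $A$ and $B$ have the same field of fractions.
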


This fact reflects the description in \cite[p.~219]{d} that ``an algebraic
variety $X$ is said to be \textit{normal} if every finite birational
morphism $X'\to X$ is an isomorphism.''

The following argument is based on the proof of \cite[Proposition~1.2.1.12]{sc}.

\begin{comment}%--for my benefit
\todo{Hartshorne, Ex. III.9.3 says that if $f:X\to Y$ is a 
finite surjective morphism of nonsingular varieties over an
algebraically closed field $k$ then $f$ is flat.}
\end{comment}
\begin{proof}
To show that $f$ is an isomorphism, it suffices to prove that
$f$ is a homeomorphism and that for any open affine $U\subset X$,
we have $f|_{U} ^{\#}$ is an isomorphism of sheaves.
By \cite[Exercise~II.2.18]{h}, the homeomorphism property follows from
the isomorphism of sheaves.

First, since $X$ is reduced and $f$ has degree~1,
the map $f$ must be birational by \cite[p.~299, Example~2]{kl}.

Next, we want that for any open affine $U\subset X$,
the map $f|_{U} ^{\#}$ is an isomorphism of sheaves.  Since $U$ is
affine, we have $U=\Spec B$ for some $k$-algebra $B$.  
Since $f$ is
finite, 
$f^{-1}(U)=\Spec A$ where $A$ is a $B$ algebra which is
a finitely generated $B$-module. 
As in Lemma~\ref{lem:  integral, finite=>inj}, the map 
$f|_{f^{-1}(U)}$ corresponds to some map $\phi:B\to A$ which
induces an inclusion 
$B\subset A$. Since $f$ is birational, we have that
$A$ and $B$ have the same field of fractions.

   As $X$ is normal, we know that 
$B$ is integrally closed; however, since $A$ is a finite $B$-algebra, 
$A$ is integral over $B$.  Thus $A=B$ because $A$ and $B$ have the same
field of fractions. So the sheaf
map induced from $\phi$ is an isomorphism of sheaves.  
\end{proof} 

We must now strengthen the hypothesis on $X$ from
normal to regular. By regular we mean that the stalks of $\Oh_X$
are regular local rings. Recall that regular implies
normal \cite[Exercise~I.5.13]{h}.

\begin{lemma}\label{lem:  normal+finite=>flat}
Let $X$ be a regular, integral scheme and $f:X\to X$ a finite morphism.  
Then $f$ is flat.
\end{lemma}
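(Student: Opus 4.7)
The plan is to verify flatness stalk-by-stalk and apply the \emph{miracle flatness} theorem, which asserts that a local homomorphism $R \to S$ of Noetherian local rings with $R$ regular, $S$ Cohen--Macaulay, and $\dim S = \dim R + \dim(S/\mathfrak{m}_R S)$ is automatically flat.

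Since flatness is a stalk-local property, I would fix $x \in X$ with $y = f(x)$ and show that $S = \Oh_{X,x}$ is flat over $R = \Oh_{X,y}$. Because $X$ is regular, both $R$ and $S$ are regular local rings, so in particular $S$ is Cohen--Macaulay. Thus the two remaining hypotheses for miracle flatness are $\dim(S/\mathfrak{m}_R S) = 0$ and $\dim S = \dim R$.

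To verify these, I would choose an affine open $\Spec B \ni y$ with $f^{-1}(\Spec B) = \Spec A$, and take $\mathfrak{q} \subset B$ and $\mathfrak{p} \subset A$ to be the primes corresponding to $y$ and $x$. By Lemma~\ref{lem:  integral, finite=>inj} we may regard $B \subset A$ with $A$ finite over $B$. The fiber ring $A/\mathfrak{q}A$ is then a finite $k(\mathfrak{q})$-algebra, hence Artinian; localizing at $\mathfrak{p}$ shows $\mathfrak{m}_R S$ is $\mathfrak{m}_S$-primary, giving $\dim(S/\mathfrak{m}_R S) = 0$. For the dimension equality, $A$ is an integral domain integral over the normal domain $B$ (recall regular implies normal), so going-down yields $\mathrm{ht}(\mathfrak{p}) = \mathrm{ht}(\mathfrak{q})$, that is, $\dim S = \dim R$.

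With both hypotheses in hand, miracle flatness gives that $S$ is flat over $R$, and since $x$ was arbitrary, $f$ is flat. The main obstacle is the dimension equality $\dim S = \dim R$, which requires normality (to apply going-down to the finite integral extension $B \subset A$ of domains); the strengthening from normal to regular in the hypothesis is then what makes $R$ itself regular, enabling miracle flatness to apply.
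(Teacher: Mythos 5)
Your proof is correct and takes essentially the same route as the paper: the paper observes that regularity gives Cohen--Macaulayness and then cites Hartshorne, Exercise~III.10.9, which is precisely the miracle-flatness criterion you invoke. The only difference is that you verify the fiber-dimension and dimension-equality hypotheses by hand (finiteness of fibers plus going-down over the normal base), whereas the paper delegates those checks to the cited exercise.
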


\begin{proof}  
Since $X$ is regular, $X$ is Cohen-Macaulay \cite[Theorem~II.8.21A]{h}.
Thus $f$ is flat by \cite[Exercise~III.10.9]{h}.
%Thus $f_* \Oh_X$ is locally free by \cite[Corollary~18.17]{e}
%and hence $f$ is flat \cite[Exercise~III.6.10]{h}.
\end{proof}

Note that the regular hypothesis is necessary in general.
In \cite{Ku}, it was shown that in characteristic $p$,
 a local ring $R$ is regular if and only if
$R$ is reduced and flat over $R^p$, the image of $R$ under the Frobenius
morphism. Thus the Frobenius endomorphism of an integral scheme $X$ is
flat if and only if $X$ is regular.

By definition, a morphism $f$ is \emph{faithfully flat}
if $f$ is surjective and flat. We then have the following by
Lemmas~\ref{lem:  int proj, finite=>surj}
and \ref{lem:  normal+finite=>flat}.

\begin{corollary}\label{cor:  normal, int, finite=> faithfully flat}
Let $X$ be a regular, integral scheme.  
Then any finite morphism $f:X\to X$ is faithfully flat.\qed
\end{corollary}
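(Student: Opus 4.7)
The plan is essentially to observe that this corollary is a one-line assembly of the two preceding results together with the definition of faithful flatness. By definition, a morphism is faithfully flat precisely when it is both flat and surjective, so the work has already been done and only needs to be recorded.

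First I would invoke Lemma~\ref{lem: int proj, finite=>surj}: since $X$ is an integral scheme and $f$ is a finite endomorphism of $X$, we conclude that $f$ is surjective. The regularity hypothesis is not needed for this step; integrality of $X$ suffices.

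Next I would invoke Lemma~\ref{lem: normal+finite=>flat}: since $X$ is regular and integral and $f:X\to X$ is finite, $f$ is flat. Here regularity is essential (as the remark after Lemma~\ref{lem: normal+finite=>flat} notes, Kunz's theorem shows this cannot be weakened in general, e.g.\ the Frobenius morphism characterizes regularity in characteristic $p$).

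Combining the two gives that $f$ is both surjective and flat, hence faithfully flat by definition. There is no obstacle here; the only substantive ingredients are the two lemmas and an appeal to the definition, which is precisely why the statement in the excerpt is closed with \qed rather than carrying its own proof body.
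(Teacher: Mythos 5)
Your proposal is correct and matches the paper exactly: the corollary is stated with \qed because it follows immediately from Lemma~\ref{lem:  int proj, finite=>surj} (surjectivity, needing only integrality), Lemma~\ref{lem:  normal+finite=>flat} (flatness, needing regularity), and the definition of faithfully flat given just before the statement. Nothing further is required.
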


\section{Bimodule algebras}

%\subsection{The algebra}

In this section we examine bimodule algebras in the sense of
\cite{vdb}, while verifying some basic properties that were omitted.

Recall the following definitions from \cite[Definitions~2.1 and 2.3]{vdb}.
\begin{definition}
Given $f:Y\to X$ a morphism of finite-type between noetherian schemes 
and $\mc{M}$ a quasicoherent $\Oh_Y$-module,
we say that $\mc{M}$ is \emph{relatively locally finite} (rlf) for $f$ 
if for all coherent $\mc{M}'\subset \mc{M}$
the restriction $f|_{\supp \mc{M}'}$ is finite.

Given $X$ and $Y$ noetherian $S$-schemes of finite-type, 
an \emph{$\Oh_S$-central $\Oh_X-\Oh_Y$ bimodule} is a 
quasi-coherent $\Oh_{X\times_S Y}$-module, relatively locally finite for the projections 
$\pr_{1,2}:X\times_S Y\to X,Y$.
\end{definition}

\begin{definition}\label{def:  the algebra}
Let $k$ be field.  Let $\mc{L}$ be a quasi-coherent sheaf on a noetherian scheme $X$ 
over $S=\Spec k$ and $\sigma$
a finite $S$-homomorphism $X\to X$.  Let $\mc{L}_0=\mc{O}_X$.  Then
for all integers $j\geq 1$, define 
\[
\mc{L}_j:=\mc{L}\otimes_{\mc{O}_X} \sigma^* \mc{L}\otimes_{\mc{O}_X}\cdots \otimes_{\mc{O}_X}
 (\sigma^{j-1})^*\mc{L}.
 \]

Define $\mc{B}_0=\Oh_X$ and, for all $j\geq 1$, define $\mc{B}_j$ to be the bimodule 
${}_1 (\mc{L}_j)_{\sigma ^j}$.  Then let $\mc{B}=\bigoplus_{j=0} ^\infty \mc{B}_j$ 
(alternatively, let $\mc{B}_j=0$ 
for all $j<0$ and let $\mc{B}=\bigoplus_{j\in\mathbb{Z}} \mc{B}_j$).  

Then let $B_j=\Gamma(\mc{B}_j)=H^0(X,\mc{B}_j)$ and 
define $B=\bigoplus_{n=0} ^\infty B_j=\Gamma(\mc{B})$.
\end{definition}
As in \cite{av} and \cite{vdb}, this construction produces an 
$\mathbb{N}$-graded $k$-algebra $B$ and a graded bimodule algebra $\mc{B}$,  
as Theorem~\ref{thm:  mc{B} is alg} will show shortly.

\begin{lemma}\label{lem:  O_S central bimodule}
Let $S = \Spec k$ and $V,X,Y$ be proper $S$-schemes %of finite-type
with finite $S$-maps $\alpha:V \to X, \beta: V \to Y$. 
Let $\mc{M}$ be a quasi-coherent $\Oh_V$-module. Then 
${}_\alpha \mc{M}_\beta$ is an $\Oh_S$-central $\Oh_X-\Oh_Y$-bimodule.

In particular,
for all $j\in \mb{Z}$, the $\Oh_X$-bimodule $\mc{B}_j$ 
is an $\Oh_S$-central $\Oh_X-\Oh_X$ bimodule.
\end{lemma}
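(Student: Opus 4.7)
The plan is to realize the bimodule ${}_\alpha\mc{M}_\beta$ as $\gamma_*\mc{M}$, where $\gamma = (\alpha,\beta) : V \to X \times_S Y$ is the morphism with components $\alpha$ and $\beta$, and then reduce the two conditions in the definition of an $\Oh_S$-central bimodule---quasi-coherence on $X\times_S Y$ and relative local finiteness for each projection---to finiteness properties of $\gamma$ and of the projections restricted to $\gamma(V)$.

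First I would verify that $\gamma$ is finite. The cleanest route is the factorization $\gamma = (\alpha \times_S \beta) \circ \Delta_V$. Since $V$ is proper, hence separated, over $S$, the diagonal $\Delta_V$ is a closed immersion, and therefore finite. The product $\alpha \times_S \beta$ is finite by stability of finiteness under base change and composition. Since finite morphisms are affine, $\gamma_*\mc{M}$ is then automatically a quasi-coherent $\Oh_{X\times_S Y}$-module, which handles the first half of the bimodule definition.

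Next I would turn to relative local finiteness for $\pr_1$, the case of $\pr_2$ being completely symmetric. Let $Z := \gamma(V)$, which is closed in $X\times_S Y$ since $\gamma$ is proper. The key step is to show that $\pr_1|_Z$ is finite. Properness is free: $\pr_1 : X\times_S Y \to X$ is proper by base change from $Y\to S$, and its restriction to the closed subscheme $Z$ is again proper. I expect the heart of the argument to be a short fiber computation: the fiber of $\pr_1|_Z$ over $x \in X$ is set-theoretically contained in $\{x\} \times \beta(\alpha^{-1}(x))$, which is finite because $\alpha$ is. Proper plus finite fibers gives finite in this noetherian setting. Any coherent $\mc{M}' \subset \gamma_*\mc{M}$ has support inside $Z$, and restrictions of finite morphisms to closed subschemes of the source remain finite, so $\pr_1|_{\supp \mc{M}'}$ is finite.

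The ``In particular'' statement then follows by applying the general result with $V = X$, $Y = X$, $\alpha = \id_X$, and $\beta = \sigma^j$, using that $\sigma^j$ is finite as a composition of finite morphisms. I do not anticipate a serious obstacle; the only places that demand care are the factorization through the diagonal and the invocation of ``proper plus finite fibers equals finite,'' both of which are standard in the noetherian setting assumed throughout the paper.
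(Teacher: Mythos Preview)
Your proposal is correct and follows essentially the same route as the paper: both show $(\alpha,\beta)$ is finite (the paper cites Hartshorne's Exercise~II.4.8, which encodes exactly your diagonal-times-product factorization), then argue that the image $W=\gamma(V)$ is closed and that $\pr_1|_W$ is proper with finite fibers, hence finite, so any coherent subsheaf of $\gamma_*\mc{M}$ has support on which $\pr_1$ is finite. The only cosmetic difference is that the paper obtains properness of $\pr_1|_W$ via the cancellation property (both $W$ and $X$ proper over $S$), whereas you obtain it by base-changing $Y\to S$ and restricting to a closed subscheme; both are standard.
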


\begin{proof}
Since $X \to S, Y \to S$ are proper, so is $X \times_S Y \to S\times_S S \cong S$
\cite[Corollary~II.4.8(d)]{h}.
Since $\alpha, \beta$ are finite, so is $(\alpha, \beta)$,
as finite maps satisfy the properties of \cite[Exercise~II.4.8]{h}. 
Since every finite map
is proper \cite[Exercise~II.4.1]{h}, we have that
$(\alpha, \beta)$ is closed. Hence $W:=(\alpha, \beta)(V)$ is
a closed subscheme of $X \times_S Y$ and hence proper
\cite[Corollary~II.4.8(a)]{h}.

So consider the commutative diagram
\begin{center}
%\begin{tikzcd}
%V \arrow[r, "\alpha\times  \beta"] \arrow[rd, "\alpha"] & W  \arrow[d, "\pr_1\vert_W"] \\
%& X.
%\end{tikzcd}
\includegraphics[scale=0.43]{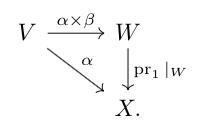}
\end{center}
Since $W$ and $X$ are proper over $\Spec k$, the map $\pr_1\vert_W$ is proper by
\cite[Corollary~II.4.8(e)]{h}. Since $(\alpha, \beta)$ surjects onto $W$
and $\alpha$ is finite, we have that $\pr_1\vert_W$ is quasi-finite
(that is, $\pr_1\vert_W$ has finite fibers). Since
$\pr_1\vert_W$ is proper and quasi-finite, 
the map is finite \cite[$\mathrm{IV}_3$, Th\'{e}or\`{e}me 8.11.1]{ega}.

Now let $\mc{M}$ be a quasi-coherent $\Oh_V$-module
and  $\mc{F}$ be a coherent subsheaf of $(\alpha, \beta)_* \mc{M}$.
Then $\supp \mc{F}$ is a closed subscheme of $W$ \cite[Caution~II.5.8.1]{h}.
So we have $\pr_1\vert_{\supp \mc{F}}$ is finite. 
Thus $(\alpha, \beta)_* \mc{M}$ is rlf for $\pr_1$ and similarly for $\pr_2$.
And thus $(\alpha, \beta)_* \mc{M}$ is an $\Oh_S$-central $\Oh_X-\Oh_Y$ bimodule
by definition.

The claim regarding  $\mc{B}_j$ then follows as a special case.
\end{proof}

\begin{corollary}
Let $X$ be a $k$-scheme of finite-type; recall 
Definition~\ref{def:  the algebra}.   
For any $j\in \mb{Z}$ and any $\Oh_X$-module $\mc{M}$, the bimodule tensor 
product $\; _1\mc{M}_1 \otimes_{\Oh_X}\mc{B}_j$ is an
$\Oh_S$-central $\Oh_X-\Oh_X$ bimodule where $S=\Spec k$.
\end{corollary}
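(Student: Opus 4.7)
The plan is to identify the bimodule tensor product $\; _1\mc{M}_1 \otimes_{\Oh_X}\mc{B}_j$ explicitly as a bimodule of the form ${}_\alpha \mc{N}_\beta$ for finite $S$-maps $\alpha,\beta$, so that Lemma~\ref{lem:  O_S central bimodule} applies directly.

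First I would unwind Van den Bergh's definition of the bimodule tensor product from \cite{vdb}: given bimodules ${}_\alpha\mc{P}_\beta$ on $V$ and ${}_\gamma\mc{Q}_\delta$ on $W$, their tensor product over $\Oh_Y$ is obtained by pulling $\mc{P}$ and $\mc{Q}$ back to the fibered product $V \times_Y W$ (formed via $\beta$ and $\gamma$), tensoring there, and pushing the result forward along $(\alpha \circ p_1,\, \delta \circ p_2)$. In our situation both structure maps of $\; _1\mc{M}_1$ are $\id_X$ while $\mc{B}_j = {}_1(\mc{L}_j)_{\sigma^j}$, so the fibered product $X\times_X X$ reduces to $X$ and the two pullbacks are the identity. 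Hence
\[
\; _1\mc{M}_1 \otimes_{\Oh_X}\mc{B}_j \;\cong\; {}_1\bigl(\mc{M}\otimes_{\Oh_X}\mc{L}_j\bigr)_{\sigma^j}.
\]
For $j<0$ we have $\mc{B}_j=0$ and the claim is trivial.

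With this identification in hand, I would apply Lemma~\ref{lem:  O_S central bimodule} with $V=X$, $\alpha=\id_X$, $\beta=\sigma^j$, and the quasi-coherent sheaf $\mc{M}\otimes_{\Oh_X}\mc{L}_j$. Both $\id_X$ and $\sigma^j$ are finite $S$-morphisms (the latter because $\sigma$ is, and a composition of finite morphisms is finite), so the hypotheses of the lemma are met and the conclusion follows at once.

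The main obstacle is simply making the somewhat cumbersome definition of the bimodule tensor product explicit enough to recognize the resulting sheaf as ${}_{\id}(\mc{M}\otimes\mc{L}_j)_{\sigma^j}$; once that bookkeeping is done, the corollary is a direct specialization of Lemma~\ref{lem:  O_S central bimodule} rather than a genuinely new argument.
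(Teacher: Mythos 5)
Your proposal is correct, but it takes a different route from the paper. The paper's own proof is two lines: by Lemma~\ref{lem:  O_S central bimodule}, both ${}_1\mc{M}_1$ and $\mc{B}_j$ are $\Oh_S$-central $\Oh_X-\Oh_X$-bimodules, and then it simply cites Van den Bergh (\cite[p.~442]{vdb}) for the fact that the tensor product of two $\Oh_S$-central bimodules is again $\Oh_S$-central. You instead compute the tensor product explicitly, identifying
\[
{}_1\mc{M}_1 \otimes_{\Oh_X}\mc{B}_j \;\cong\; {}_1\bigl(\mc{M}\otimes_{\Oh_X}\mc{L}_j\bigr)_{\sigma^j},
\]
which is exactly the special case of \cite[Lemma~2.8(2)]{vdb} that the paper records later as Lemma~\ref{lem:  welldef mult} (with $\beta=\id_X$, $\delta=\sigma^j$), and then apply Lemma~\ref{lem:  O_S central bimodule} once to the identified bimodule, with $\sigma^j$ finite as a composite of finite maps and the case $j<0$ trivial since $\mc{B}_j=0$. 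Your route buys an explicit description of the product bimodule (which the paper needs anyway for the algebra structure in Theorem~\ref{thm:  mc{B} is alg}), at the cost of invoking the tensor-product computation before the paper formally states it; the paper's route is shorter and works verbatim even when the left factor is not of the diagonal form ${}_1\mc{M}_1$, since it only uses closure of $\Oh_S$-central bimodules under tensor product. One caveat applies equally to both arguments: $\mc{M}$ should be taken quasi-coherent (as the definition of an $\Oh_S$-central bimodule requires), and Lemma~\ref{lem:  O_S central bimodule} as stated assumes properness over $S$, which the corollary's finite-type hypothesis does not literally supply — but this is inherited from the paper's own proof, not a defect specific to yours.
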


%This corollary will show that the graded pieces of the graded
%bimodule $\mc{B}(\ell) = \Oh_X(\ell)\otimes_{\Oh_X}\mc{B}$ are
%also $\Oh_S$-central $\Oh_X-\Oh_X$ bimodules.  

\begin{proof}  
By Lemma~\ref{lem:  O_S central bimodule} we have that
${}_1 \mc{M}_1$ and $\mc{B}_j$ are both $\Oh_S$-central $\Oh_X-\Oh_X$-bimodules.
By \cite[p.~442]{vdb},  so
is $\; _1\mc{M}_1 \otimes_{\Oh_X}\mc{B}_j$. 
\end{proof}

%\todo{In later arguments, I use that $B_0(X)=\Oh_X(X)=k$ and that in
%general $\mc{B}_n(X)$ is finite dimensional.  
%Make sure it's OK for $X$ irreducible and $\mc{B}_n$ irreducible,
%respectively---and that these assumptions are made.}

Technically each $\mc{B}_j$ is defined on $X\times_{\Spec k} X$; 
since $\Spec k$ is a single point,
one may identify $X\times_{\Spec k}X$ with the Cartesian product $X\times X$.  
%Also, the notation
%$\mc{B}_j(U)$ will be used to mean $\mc{B}_j(X\times U)$, similar to \cite{vdb} 
%given that we will be
%looking at right modules.  

Looking specifically at the right-hand and left-hand module structure, we note
that 
on the right, we have
\begin{multline*}
\mc{B}_n(X\times U)=(1,\sigma ^n)_*\mc{L}_n(X\times U) \\
=\mc{L}_n((1,\sigma^n)^{-1}(X\times U))=\mc{L}_n((\sigma ^n)^{-1}U)=(\sigma^n)_*\mc{L}_n(U).
\end{multline*}
Similarly, on the left, we have 
\[
\mc{B}_n(U\times X)
=(1,\sigma ^n)_*\mc{L}_n(U\times X)
=\mc{L}_n((1,\sigma^n)^{-1}(U\times X))=\mc{L}_n(U).
\]
%Therefore at the level of stalks, as a left module
%$\;_x(\mc{B}_j)\cong \;_x(\mc{L}_j)$, while as a right module
%$(\mc{B}_j)_x\cong((\sigma^j)_*\mc{L}_j)_x$ at any $x\in X$.  (Here we
%have put the subscript for the stalk on the side corresponding to which
%module structure we are using.)  We will discuss stalks of the bimodule
%later and only at generic points.  

So the left module structure in this case is always one from an invertible 
sheaf.  However, it is 
possible in general for $\mc{B}_j$ to have an invertible left module
structure, but not an invertible right module structure 
since in general the pushforward of an invertible sheaf is not invertible.  
In fact, $\mc{B}_j$ is an invertible bimodule if and only if
$\sigma$ is an automorphism \cite[Proposition~2.15]{av}.

On the other hand, by \cite[Proposition~4.5]{h2} if $\sigma$ 
is a faithfully flat morphism and $\mc{F}$ a locally free
sheaf of finite rank, then $\sigma_*\mc{F}$ is also locally free of finite rank. 
Thus these bimodules still have nice tensor properties,
as in Lemma~\ref{lem:  tensor with mc{B}_j is exact}.

%Even though the sheaf $\mc{L}_n$ is formally defined on $X$, 
%it is more intuitively helpful 
%to see $\mc{L}_n$ as defined on the graph of $\sigma^n$.  While the graph of
%$\sigma^n$ is isomorphic to $X$, the set $\{(x,\sigma^n(x))\}$ gives
%a similar two-sided impression as the bimodule. 

\begin{theorem}\label{thm:  mc{B} is alg}
Let $\mc{L}$ be a quasi-coherent sheaf on a noetherian scheme $X$ of 
finite-type and $\sigma$
a finite homomorphism $X\to X$.  Then 
\begin{itemize}
\item the $\mc{B}$ from Definition~\ref{def:  the algebra}
is a graded bimodule algebra; 
\item the $B$ from Definition~\ref{def:  the algebra} is a $\mathbb{N}$-graded $k$-algebra.
\end{itemize}
\end{theorem}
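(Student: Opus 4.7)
The plan is to construct the multiplication maps $\mu_{i,j}\colon \mc{B}_i \otimes_{\Oh_X} \mc{B}_j \to \mc{B}_{i+j}$ directly from the definition of $\mc{L}_j$, verify associativity, and then descend to $B$ by applying $H^0(X,-)$.

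First I would unwind the bimodule tensor product. By definition $\mc{B}_i = {}_1(\mc{L}_i)_{\sigma^i}$ and $\mc{B}_j = {}_1(\mc{L}_j)_{\sigma^j}$, and the standard bimodule identity (see \cite[Section~3]{vdb}) gives
\[
{}_1(\mc{L}_i)_{\sigma^i} \otimes_{\Oh_X} {}_1(\mc{L}_j)_{\sigma^j} \;\cong\; {}_1\bigl(\mc{L}_i \otimes_{\Oh_X} (\sigma^i)^* \mc{L}_j\bigr)_{\sigma^{i+j}}.
\]
Since $(\sigma^i)^*$ commutes with tensor products of sheaves,
\[
(\sigma^i)^* \mc{L}_j = (\sigma^i)^*\mc{L} \otimes (\sigma^{i+1})^*\mc{L} \otimes \cdots \otimes (\sigma^{i+j-1})^*\mc{L},
\]
so the factor in parentheses collapses to $\mc{L}_{i+j}$, giving a canonical isomorphism $\mc{B}_i \otimes_{\Oh_X} \mc{B}_j \cong \mc{B}_{i+j}$. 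I take this isomorphism as $\mu_{i,j}$. The corollary preceding the theorem together with Lemma~\ref{lem: O_S central bimodule} ensures that the tensor product is again an $\Oh_S$-central bimodule, so the target category is the right one.

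Next I would verify the unit and associativity axioms. The unit is provided by $\mc{B}_0 = \Oh_X$, and the identities $\mu_{0,j}$ and $\mu_{j,0}$ reduce to the canonical isomorphisms $\Oh_X \otimes \mc{L}_j \cong \mc{L}_j$ and $\mc{L}_j \otimes (\sigma^j)^*\Oh_X \cong \mc{L}_j$. For associativity, the two composites $\mc{B}_i \otimes \mc{B}_j \otimes \mc{B}_k \to \mc{B}_{i+j+k}$ obtained by multiplying left-to-right or right-to-left both reduce, via the identity above applied twice, to the canonical reassociation isomorphism for the tensor product
\[
\mc{L} \otimes \sigma^*\mc{L} \otimes \cdots \otimes (\sigma^{i+j+k-1})^*\mc{L},
\]
and hence agree. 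This makes $\mc{B}$ a graded bimodule algebra in the sense of \cite{vdb}.

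Finally, for the second bullet, I would apply $\Gamma = H^0(X,-)$. The lax monoidal structure on $\Gamma$ supplies natural $k$-linear maps $\Gamma(\mc{B}_i) \otimes_k \Gamma(\mc{B}_j) \to \Gamma(\mc{B}_i \otimes_{\Oh_X} \mc{B}_j)$; composing with $\Gamma(\mu_{i,j})$ yields maps $B_i \otimes_k B_j \to B_{i+j}$. Functoriality of $\Gamma$ transports associativity and the unit to $B$, so $B$ is an $\mathbb{N}$-graded $k$-algebra. The main technical point — really the only place where anything could go wrong — is verifying that the bimodule tensor product formula above holds in the Van den Bergh framework when $\sigma$ is only a finite endomorphism (not an automorphism); this is where Lemma~\ref{lem: O_S central bimodule} is essential, since it guarantees that the pushforwards along $(1,\sigma^i)$ remain relatively locally finite and the identification makes sense on the product scheme $X \times_{\Spec k} X$.
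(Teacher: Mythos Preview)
Your approach is essentially the same as the paper's: the central step is the bimodule identity
\[
{}_1(\mc{L}_i)_{\sigma^i}\otimes_{\Oh_X}{}_1(\mc{L}_j)_{\sigma^j}\;\cong\;{}_1\bigl(\mc{L}_i\otimes_{\Oh_X}(\sigma^i)^*\mc{L}_j\bigr)_{\sigma^{i+j}},
\]
which the paper isolates as Lemma~\ref{lem:  welldef mult} (proved via \cite[Lemma~2.8(2)]{vdb} rather than Section~3). Your write-up is slightly more explicit about the unit and associativity axioms and about the lax monoidal passage to $B$, but the argument is the same.
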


\begin{proof}
The set $B$ automatically has a well-defined addition.  
Having a (compatible) well-defined multiplication 
will follow immediately from the product map $\mu:\mc{B}\otimes_{\mc{O}_X} \mc{B}$ 
that is part of $\mc{B}$'s 
bimodule-algebra structure.

The composition of inclusions $\mc{O}_X\into \mc{B}_0\into \mc{B}$ gives the unit map.  
So all that is left to
verify from \cite[Definition~3.1(1)]{vdb} is the product map.   

The key step in proving this is Lemma~\ref{lem:  welldef mult}, which shows that 
\[
{}_1 \mc{L}_f \otimes {}_1 \mc{M}_g \cong \; {}_1 (\mc{L}\otimes f^* \mc{M} )_{gf}
\]
for any finite homomorphisms $f$ and $g$ from $X$ to itself
and any quasi-coherent sheaves $\mc{L}$ and $\mc{M}$.
Then
\begin{eqnarray*}
{}_1 ( \mc{L}_j)_{\sigma^j} \otimes_{\mc{O}_X} {}_1 (\mc{L}_n)_{\sigma^n}
 &\cong & {}_1 (\mc{L}_j \otimes_{\mc{O}_X} (\sigma^j)^*\mc{L}_n)_{\sigma^{j+n}}\\
&\cong & {}_1 (\mc{L}_{j+n})_{\sigma^{j+n}}\\
\end{eqnarray*}
which is $\mc{B}_j\otimes \mc{B}_n \cong \mc{B}_{j+n}$.
For the ring $B$, that its multiplication is defined and has the usual 
compatibilities follows from the induced map on global sections 
$\mc{B}_j(X)\otimes_{\Oh_X(X)}\mc{B}_n(X)\to \mc{B}_{j+n}(X)$.
\end{proof}

We now examine a special case of \cite[Lemma~2.8(2)]{vdb}.

\begin{lemma}\label{lem:  welldef mult}
Let $X,Y,Z$ be noetherian schemes and $\beta:X\to Y, \delta: Y \to Z$ finite maps.  Let
$\mc{L}$ (respectively $\mc{M}$) be quasicoherent $\Oh_X$-module 
(respectively $\Oh_Y$-module).  Then
\[
{}_{\id_X} \mc{L}_\beta \otimes_{\Oh_Y}  {}_{\id_Y} \mc{M}_\delta 
\cong {}_{\id_X} (\mc{L}\otimes_{\Oh_X} \beta^* \mc{M} )_{\delta \beta}
\]
as $\Oh_S$-central $\Oh_X-\Oh_Z$-bimodules.
Therefore, using the notation from Definition~\ref{def:  the algebra},
we have that $\mc{B}_j\otimes_{\Oh_X}\mc{B}_\ell \cong \mc{B}_{j+\ell}$
for all nonnegative integers $j$ and $\ell$.
\end{lemma}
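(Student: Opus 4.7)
The plan is to unpack both sides of the asserted isomorphism and identify them via a local calculation on compatible affine charts. Both sides are $\Oh_S$-central $\Oh_X$-$\Oh_Z$-bimodules, i.e.\ quasicoherent sheaves on $X\times_S Z$. The right-hand side is by construction $(\id_X,\delta\beta)_*(\mc{L}\otimes_{\Oh_X}\beta^*\mc{M})$, while the left-hand side, by Van den Bergh's definition of the bimodule tensor product, is computed on $X\times_S Y\times_S Z$ as
\[
(\pr_{13})_*\bigl(\pr_{12}^*\,{}_{\id_X}\mc{L}_\beta \;\otimes_{\Oh_{X\times_S Y\times_S Z}}\; \pr_{23}^*\,{}_{\id_Y}\mc{M}_\delta\bigr).
\]

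The first step is to apply flat base change to each pullback; the projections $\pr_{12}$ and $\pr_{23}$ are flat (since the structure maps to $\Spec k$ are), so each pullback is again a pushforward from a closed subscheme of $X\times_S Y\times_S Z$. The two supports are the ``extended graphs'' $\{(x,\beta(x),z)\}$ and $\{(x,y,\delta(y))\}$, and their scheme-theoretic intersection is the graph of $(\beta,\delta\beta)$, isomorphic to $X$; the tensor product of the two pullbacks is concentrated along this intersection.

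The second step is the local verification. Since $\beta$ and $\delta$ are finite, hence affine, one may cover $Z$ by affine opens $\Spec C$ whose preimages $\Spec B := \delta^{-1}(\Spec C) \subset Y$ and $\Spec A := \beta^{-1}(\Spec B)\subset X$ are also affine. Writing $L$ and $M$ for the modules corresponding to $\mc{L}|_{\Spec A}$ and $\mc{M}|_{\Spec B}$, with ring homomorphisms $\beta^\#\colon B\to A$ and $\delta^\#\colon C\to B$, the right-hand side on this patch becomes $L\otimes_A(A\otimes_B M)\cong L\otimes_B M$ viewed as an $A$-$C$-bimodule with $C$ acting through the composite $C\to B\to A$. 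A direct computation in rings shows that the left-hand side reduces to the same module with the same bimodule structure. The local isomorphisms are canonical in the data $(\mc{L},\mc{M},\beta,\delta)$, so they patch to a global isomorphism of $\Oh_S$-central $\Oh_X$-$\Oh_Z$-bimodules.

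The main obstacle will be the bookkeeping: carefully tracking the actions through the pushforwards and pullbacks, and confirming that the $A$-$C$-bimodule structures produced by the two sides really do coincide locally. Once the general formula is established, the final assertion $\mc{B}_j\otimes_{\Oh_X}\mc{B}_\ell\cong\mc{B}_{j+\ell}$ follows by setting $\mc{L}=\mc{L}_j$, $\mc{M}=\mc{L}_\ell$, $\beta=\sigma^j$, $\delta=\sigma^\ell$, and invoking the definition $\mc{L}_{j+\ell}=\mc{L}_j\otimes_{\Oh_X}(\sigma^j)^*\mc{L}_\ell$ from Definition~\ref{def:  the algebra}.
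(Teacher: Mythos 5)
Your proposal is correct, but it takes a genuinely different route from the paper. The paper does not compute anything on the triple product: it simply invokes Van den Bergh's Lemma~2.8(2) in the special case $V=X$, $W=Y$, $\alpha=\id_X$, $\gamma=\id_Y$, observes that the relevant fiber product $X\times_Y Y$ is isomorphic to $X$ via the first projection $p$, and then uses the identification (the ``$\epsilon$-twist'' discussed below Van den Bergh's Equation~2.2, with $\epsilon=p^{-1}$) to rewrite ${}_p(p^*\mc{L}\otimes q^*\mc{M})_{\delta\circ q}$ as ${}_{\id_X}(\mc{L}\otimes_{\Oh_X}\beta^*\mc{M})_{\delta\beta}$. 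What you do instead is unwind the definition of $\otimes_{\Oh_Y}$ as $(\pr_{13})_*(\pr_{12}^*(-)\otimes\pr_{23}^*(-))$ on $X\times_S Y\times_S Z$ and verify the isomorphism by base change plus a local affine calculation; in effect you are reproving the needed special case of Van den Bergh's lemma from scratch. Your version is more self-contained and makes the resulting $\Oh_X$--$\Oh_Z$ structure (with $\Oh_Z$ acting through $C\to B\to A$) completely explicit, at the price of the bookkeeping you acknowledge: you should note that base change through the pushforwards is legitimate either because $(\id_X,\beta)$ and $(\id_Y,\delta)$ are finite, hence affine (affine base change needs no flatness), or by the flatness of $\pr_{12},\pr_{23}$ over the field as you say; and since your charts $\Spec A\times\Spec B\times\Spec C$ cover only the graph $\{(x,\beta(x),\delta\beta(x))\}$ rather than all of the triple product, you need the observation (which your ``concentrated along the intersection'' remark supplies) that both sides vanish off that graph before gluing the canonical local isomorphisms. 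One last small point: $\mc{L}_{j+\ell}\cong\mc{L}_j\otimes_{\Oh_X}(\sigma^j)^*\mc{L}_\ell$ is an immediate consequence of Definition~\ref{def:  the algebra} (using $(\sigma^j)^*(\sigma^i)^*\cong(\sigma^{i+j})^*$), not the definition verbatim; the paper performs exactly this step in the proof of Theorem~\ref{thm:  mc{B} is alg}, so your final claim goes through as stated.
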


\begin{proof}
The first claim is \cite[Lemma~2.8(2)]{vdb} in the special case of 
$X=V, Y=W, \alpha = \id_X, \gamma = \id_Y$. 
To follow that proof, we let $p,q: X\times_Y Y \to X, Y$ be 
the two projections. Because $\gamma = \id_Y$, we have that
$p:X \times_Y Y \to X$ is an isomorphism, induced by the
ring isomorphism $B \to B \otimes_A A$. We then have the following commutative diagram,
as in the proof of \cite[Lemma~2.8(2)]{vdb}.
\begin{center}
%\begin{tikzcd}
%& & X \arrow[d, "p^{-1}"] 
%  \arrow[dddll, bend right, "\alpha''" above left] \arrow[dddrr, bend left, "\beta''" above right]\\
%& & X \times_Y Y \arrow[dl, "p" above left] \arrow[dr, "q" above right] \\
%& X \arrow[dl, "\id" above left] \arrow[dr, "\beta" above right]  
%      & & Y \arrow[dl, "\id" above left] \arrow[dr, "\delta" above right] \\
%X & & Y & & Z
%\end{tikzcd}
\includegraphics[scale=0.43]{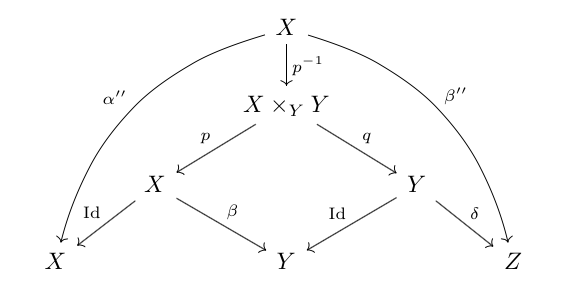}
\end{center}
Then \cite[Lemma~2.8(2)]{vdb} gives
\[
{}_{\id_X} \mc{L}_\beta \otimes_{\Oh_Y} {}_{\id_Y} \mc{M}_\delta 
\cong {}_p (p^*\mc{L} \otimes_{\Oh_{X \times_Y Y}} q^*\mc{M})_{\delta \circ q}.
\]

Note that $\alpha'' = p \circ p^{-1} = \id_X$
and $\beta'' = \delta \circ q \circ p^{-1}$. But the diagram above
also shows $q  = \beta \circ p$. Thus $\beta'' = \delta \circ \beta$.
Taking $\epsilon = p^{-1}$, we are in the special case discussed below 
\cite[Equation~2.2]{vdb}. We thus have
\begin{eqnarray*}
{}_p (p^*\mc{L} \otimes_{\Oh_{X \times_Y Y}} q^*\mc{M})_{\delta \circ q}
&\cong {}_{\id_X} ((p^{-1})^* p^* \mc{L} \otimes_{(p^{-1})^* {\Oh_{X \times_Y Y}}}
  (p^{-1})^* q^* \mc{M})_{\delta \circ \beta} \\
&\cong 
 {}_{\id_X} (\mc{L}\otimes_{\Oh_X} \beta^* \mc{M})_{\delta \beta}
\end{eqnarray*}
as desired.

The final claim follows when $X=Y=Z$.
\end{proof}

\section{Ampleness and noetherianity}

In this section we focus on cohomological vanishing related to the bimodules
$\mc{B}_n = {}_1 (\mc{L}_n)_{\sigma^n}$ where $\mc{L}$ is an invertible $\Oh_X$-module.
We begin with defining ampleness.
Following \cite[Definition~5.1]{vdb}, 
%vdB's \textit{Sklyanin algebras} papers, 
we
have the following definition, with the introduction
of the modifiers \textit{left} and \textit{right}.
We also omit the requirement that the sheaves are eventually generated 
by global sections since that is implied when $\sigma$ is faithfully flat; see
Proposition~\ref{prop:ample sequence implies ample sheaf}.

\begin{definition}\label{def:  ample}
A sequence $(\mc{B}_n)_n$ of coherent $\mc{O}_X$ bimodules is
\emph{left ample} if for any coherent $\mc{O}_X$-module, one has
that for $n\gg 0$, 
%$\mc{B}_n\otimes_{\mc{O}_X}\mc{M}$ is generated
%by global sections and 
$H^q(X,\mc{B}_n\otimes_{\mc{O}_X}\mc{M})=0$ for 
$q>0$.

A sequence $(\mc{B}_n)_n$ of coherent $\mc{O}_X$ bimodules is
\emph{right ample} if for any coherent $\mc{O}_X$ module, one has
that for $n\gg 0$, 
%$\mc{M}\otimes_{\mc{O}_X} \mc{B}_n$ is generated
%by global sections and 
$H^q(X,\mc{M}\otimes_{\mc{O}_X}\mc{B}_n)=0$ for 
$q>0$.
\end{definition}

\begin{definition}\label{def: sigma-ample}
Let $\sigma$ be a map from $X$ to $X$; let $\mc{L}$ be a quasi-coherent sheaf.  
Say that \mc{L} is \emph{left $\sigma$-ample} when the sequence
$\mc{B}_n$, as defined in Definition~\ref{def:  ample}, is left ample; say that
\mc{L} is \emph{right $\sigma$-ample} when that sequence $\mc{B}_n$ is
right ample.
\end{definition}

Since ampleness depends on tensor products, it is useful to know that
tensoring is exact.

\begin{lemma}\label{lem:  tensor with mc{B}_j is exact}
Let $X$ be an integral scheme with locally free sheaf
$\mc{L}$. Let $\alpha, \beta$ be finite faithfully flat endomorphisms of $X$.
Then tensoring the $\Oh_X$-biomdule ${}_\alpha(\mc{L})_\beta$ with 
quasicoherent $\Oh_X$-modules is exact.

Thus with $\mc{B}_j$ as in Definition~\ref{def:  the algebra},
we have that tensoring $\Oh_X$-modules with $\mc{B}$ is exact.
\end{lemma}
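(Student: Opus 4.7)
The plan is to factor the bimodule tensor product with ${}_\alpha(\mc{L})_\beta$ into three elementary operations on quasi-coherent sheaves and check that each of them is exact under the given hypotheses.

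First I would unwind the bimodule tensor product by applying \cite[Lemma~2.8(2)]{vdb} (as was done in the proof of Lemma~\ref{lem:  welldef mult}). For any quasi-coherent $\Oh_X$-module $\mc{M}$, this yields
\[
{}_1\mc{M}_1 \otimes_{\Oh_X} {}_\alpha(\mc{L})_\beta \;\cong\; {}_\alpha\!\left(\alpha^*\mc{M}\otimes_{\Oh_X}\mc{L}\right)_\beta
\]
and
\[
{}_\alpha(\mc{L})_\beta \otimes_{\Oh_X} {}_1\mc{M}_1 \;\cong\; {}_\alpha\!\left(\mc{L}\otimes_{\Oh_X}\beta^*\mc{M}\right)_\beta.
\]
As sheaves of $\Oh_X$-modules (via either the left or the right structure), these are $\alpha_*(\alpha^*\mc{M}\otimes\mc{L})$ and $\beta_*(\mc{L}\otimes\beta^*\mc{M})$ respectively.

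Second, I would observe that each ingredient in these compositions is exact on quasi-coherent sheaves: the pullbacks $\alpha^*, \beta^*$ are exact because $\alpha,\beta$ are flat; the functor $-\otimes_{\Oh_X}\mc{L}$ is exact because $\mc{L}$ is locally free; and the pushforwards $\alpha_*, \beta_*$ are exact because a finite morphism is affine and pushforward along an affine morphism is exact on quasi-coherent sheaves. Compositions of exact functors are exact, so both tensor-product functors displayed above are exact in $\mc{M}$.

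For the final claim I would specialize to $\mc{B}_j = {}_1(\mc{L}_j)_{\sigma^j}$: the morphism $\sigma^j$ is again finite and faithfully flat, since these properties are closed under composition, and the sheaf $\mc{L}_j = \mc{L}\otimes_{\Oh_X}\sigma^*\mc{L}\otimes_{\Oh_X}\cdots\otimes_{\Oh_X}(\sigma^{j-1})^*\mc{L}$ is locally free because pullbacks and tensor products of locally free sheaves are locally free. Exactness of tensoring with $\mc{B} = \bigoplus_j \mc{B}_j$ then reduces to exactness with each $\mc{B}_j$, since tensor products commute with direct sums. I do not anticipate any serious obstacle here; the main point to get right is the unwinding in the first step, which is a direct application of the formula from \cite{vdb} and tracks the correct direction of pullback and pushforward through the bimodule structure maps.
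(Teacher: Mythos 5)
Your proposal is correct and follows essentially the same route as the paper: unwind the bimodule tensor product via the formulas of \cite[Lemma~2.8]{vdb} (the paper cites part (1) rather than (2)), then factor the operation as flat pullback, tensoring with a locally free sheaf, and pushforward along a finite (hence affine) morphism, each of which is exact, and finally pass to $\mc{B}=\bigoplus_j\mc{B}_j$ using compatibility of tensor products with direct sums. The only differences are cosmetic: you treat both the left and right tensor products explicitly and spell out that $\sigma^j$ is finite faithfully flat and $\mc{L}_j$ locally free, which the paper leaves implicit.
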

\begin{proof}
By \cite[Proposition~2.6]{vdb}, tensor products of bimodules is
right exact, but it will be straightforward to prove exactness directly.
Consider the exact sequence of quasicoherent $\Oh_X$-modules
\[
\mc{M}_1 \to \mc{M}_2 \to \mc{M}_3.
\]
By \cite[Lemma~2.8(1)]{vdb}, we have
\[
\mc{M}_i \otimes_{\Oh_X} {}_\alpha(\mc{L})_\beta
\cong {}_\alpha(\alpha^*\mc{M}_i \otimes_{\Oh_X} \mc{L})_\beta.
\]
Since $\alpha$ is flat, pullback by $\alpha$ is exact 
\cite[$\mathrm{IV_2}$, Th\'{e}or\`{e}me~2.1.3]{ega}. Since $\mc{L}$ is locally free,
tensoring with $\mc{L}$ is exact. Finally, since $(\alpha,\beta)$ is a finite
map, pushfoward by $(\alpha,\beta)$ is exact. 

Since $\mc{B}_j$ is of the form ${}_\alpha(\mc{L})_\beta$ 
and $\mc{B}$ is a direct sum of the $\mc{B}_j$, we have that
tensoring with $\mc{B}$ is exact \cite[Proposition~I.10.1]{stn}.
\end{proof}

When considering the ampleness of the sequence $\mc{B}_n$, it will
be convenient to examine the related $\Oh_X$-modules
$\mc{F} \otimes \mc{L}_n$ and $\mc{L}_n \otimes (\sigma*)^n \mc{F}$, as
these have the same cohomology as $\mc{F} \otimes \mc{B}_n$ and
$\mc{B}_n \otimes \mc{F}$ since $(1, \sigma^n)$ is finite.

Also, when $f:X \to X$ is finite and faithfully flat, and $\mc{E}$ is a locally free
coherent sheaf, we have that $f_*\mc{E}$ is a locally free coherent sheaf
\cite[Proposition~4.5]{h2}. Hence tensoring with $f_*\mc{E}$ is exact.
Further, if $\mc{F}$ is any coherent sheaf, then we have the projection formula
\begin{equation}\label{eq:projection formula}
f_*(\mc{E}) \otimes \mc{F} \cong f_*(\mc{E} \otimes f^*\mc{F}).
\end{equation}
This holds for any coherent $\mc{F}$ because $f$ is finite
\cite[Lemma~5.7]{AraK}.
Thus we may study the left ampleness of $\mc{B}_n$ by tensoring
with the $\Oh_X$-modules $\sigma^n_*(\mc{L}_n)$, allowing us
to ignore the bimodule structure. Therefore we may apply the
following proposition.

\begin{proposition}\label{prop:ampleness equivalences} 
\cite[Theorem~7.2, Proposition~7.3]{k2}
Let $X$ be a projective scheme with $\mc{E}_n$ a sequence of locally free
coherent sheaves. Consider the following properties.
\begin{enumerate}
\item\label{F-ample1} For any coherent sheaf $\mc{F}$, there exists $n_0$ such that
$H^q(\mc{F} \otimes \mc{E}_n)=0$ for $q>0, n \geq n_0$.
\item\label{p-ample1} For any coherent sheaf $\mc{F}$, there exists $n_0$ such that
$\mc{F} \otimes \mc{E}_n$ is generated by global sections for $n \geq n_0$.
\item\label{ample} For any invertible sheaf $\mc{H}$, there exists $n_0$ such
that $\mc{H} \otimes \mc{E}_n$ is an ample locally free sheaf for $n \geq n_0$.
\end{enumerate}
Then (\ref{F-ample1})  $\implies$ (\ref{p-ample1}) 
 $\implies$ (\ref{ample}).

If the $\mc{E}_n$ are invertible, then the conditions
are equivalent. \qed
\end{proposition}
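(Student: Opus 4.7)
The plan is to establish the chain $(1)\Rightarrow(2)\Rightarrow(3)$, and then close the loop in the invertible case via Fujita's vanishing theorem. The implication $(1)\Rightarrow(2)$ will use Castelnuovo--Mumford regularity; $(2)\Rightarrow(3)$ will use that quotients of ample locally free sheaves are ample.

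For $(1)\Rightarrow(2)$, I would fix a very ample invertible sheaf $\mc{O}_X(1)$ and let $d=\dim X$. Given a coherent $\mc{F}$, apply $(1)$ to each of the finitely many coherent sheaves $\mc{F}(-1),\mc{F}(-2),\ldots,\mc{F}(-d)$ to obtain a single threshold $n_0$ with $H^i(X,\mc{F}(-i)\otimes \mc{E}_n)=0$ for $1\le i\le d$ and $n\ge n_0$. Rewriting this as $H^i(X,(\mc{F}\otimes\mc{E}_n)(-i))=0$ shows that $\mc{F}\otimes\mc{E}_n$ is $0$-regular with respect to $\mc{O}_X(1)$, so Mumford's lemma on regularity forces global generation.

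For $(2)\Rightarrow(3)$, fix an invertible sheaf $\mc{H}$ and choose an integer $k$ with $\mc{H}\otimes \mc{O}_X(k)$ ample (possible since $\mc{O}_X(k)$ is very ample for $k\gg 0$ and $[\mc{H}]$ is a fixed class in $\Pic X$). Apply $(2)$ to the coherent sheaf $\mc{O}_X(-k)$, producing $n_0$ such that $\mc{O}_X(-k)\otimes\mc{E}_n$ is globally generated for $n\ge n_0$, equivalently giving a surjection $\mc{O}_X(k)^{\oplus N}\onto \mc{E}_n$. Tensoring with $\mc{H}$ yields a surjection $(\mc{H}\otimes\mc{O}_X(k))^{\oplus N}\onto \mc{H}\otimes\mc{E}_n$, exhibiting $\mc{H}\otimes\mc{E}_n$ as a quotient of a direct sum of copies of an ample invertible sheaf. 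Since $\mc{H}\otimes\mc{E}_n$ is locally free and quotients of ample locally free sheaves are ample (Hartshorne, \emph{Ample vector bundles}), this completes the step.

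For the invertible case, it remains to prove $(3)\Rightarrow(1)$, and this is the step I expect to be the main obstacle, since ampleness of a single invertible sheaf only gives cohomological vanishing after taking high tensor powers, while we need uniform vanishing for $\mc{E}_n$ itself. The plan is to exploit the freedom in $(3)$ to twist by arbitrary $\mc{H}$, combined with Fujita's vanishing. Given coherent $\mc{F}$ and an auxiliary ample invertible $\mc{A}$, Fujita's theorem yields $m_0=m_0(\mc{F},\mc{A})$ such that $H^q(X,\mc{F}\otimes\mc{A}^{m_0}\otimes\mc{N})=0$ for every nef invertible $\mc{N}$ and $q>0$. Apply $(3)$ with $\mc{H}=\mc{A}^{-m_0}$ to obtain $n_0$ with $\mc{N}_n:=\mc{A}^{-m_0}\otimes\mc{E}_n$ ample (hence nef) for $n\ge n_0$. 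Writing $\mc{E}_n=\mc{A}^{m_0}\otimes \mc{N}_n$ and invoking Fujita's vanishing gives $H^q(X,\mc{F}\otimes\mc{E}_n)=0$ for $q>0$ and $n\ge n_0$, establishing $(1)$.
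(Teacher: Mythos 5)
Your argument is correct and is essentially the proof behind the cited result: the paper itself gives no proof, simply quoting \cite[Theorem~7.2, Proposition~7.3]{k2}, and the proofs there rest on the same three tools you use --- Castelnuovo--Mumford regularity for (1)$\Rightarrow$(2), quotients and direct sums of ample locally free sheaves \cite{h2} for (2)$\Rightarrow$(3), and Fujita's vanishing theorem for (3)$\Rightarrow$(1) when the $\mc{E}_n$ are invertible. The only point worth making explicit is that Fujita vanishing must be invoked in its characteristic-free form (Fujita's original theorem, or its extension in Keeler's work on ample filters), since the main applications in this paper are in characteristic $p$.
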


Note that the conditions above are not equivalent for general locally free
sheaves \cite[Remark~7.4]{k2}.

In our specific situation, we obtain the following.

\begin{proposition}\label{prop:ample sequence implies ample sheaf}
Let $X$ be a projective integral scheme with finite faithfully flat
endomorphism $\sigma$. Define $\mc{B}_n$ and $\mc{L}_n$ as in 
Definition~\ref{def:  the algebra} with $\mc{L}$ invertible. 
Let $\mc{F}$ be a coherent sheaf and let $\mc{H}$ be an invertible sheaf.
Then
\begin{enumerate}
\item\label{claim1} If $\mc{B}_n$ is right ample, then for all $n \gg 0$,
$\mc{F} \otimes \mc{L}_n$ is generated by global sections and
$\mc{H} \otimes \mc{L}_n$ is an ample invertible sheaf.
\item\label{claim2} If $\mc{B}_n$ is left ample, then for all $n \gg 0$,
$\mc{L}_n \otimes (\sigma^n)^*\mc{F}$ is generated by global sections and
$\mc{L}_n \otimes (\sigma^n)^*\mc{H}$ is an ample invertible sheaf.
\end{enumerate}
\end{proposition}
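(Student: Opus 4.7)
The plan is to reduce both claims to Proposition~\ref{prop:ampleness equivalences} by re-expressing the bimodule tensor products as ordinary tensor products of sheaves on $X$. By Lemma~\ref{lem:  tensor with mc{B}_j is exact} we have $\mc{F} \otimes_{\Oh_X} \mc{B}_n \cong {}_1(\mc{F} \otimes \mc{L}_n)_{\sigma^n}$, and by Lemma~\ref{lem:  welldef mult} we have $\mc{B}_n \otimes_{\Oh_X} \mc{F} \cong {}_1(\mc{L}_n \otimes (\sigma^n)^*\mc{F})_{\sigma^n}$. Since the map $(1,\sigma^n)\colon X \to X \times X$ is finite and hence affine, pushforward by it preserves cohomology, so the right and left ampleness hypotheses amount to the vanishing
\[
H^q(X, \mc{F} \otimes \mc{L}_n) = 0 \qquad \text{and} \qquad H^q(X, \mc{L}_n \otimes (\sigma^n)^*\mc{F}) = 0,
\]
respectively, for every coherent $\mc{F}$, every $q>0$, and all $n \gg 0$.

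Claim~(\ref{claim1}) is then immediate: the first vanishing is precisely condition~(1) of Proposition~\ref{prop:ampleness equivalences} applied with the invertible sequence $\mc{E}_n = \mc{L}_n$, so all three conditions there are equivalent and we obtain global generation of $\mc{F} \otimes \mc{L}_n$ and ampleness of the invertible sheaf $\mc{H} \otimes \mc{L}_n$ for $n \gg 0$.

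For claim~(\ref{claim2}) I would first apply the projection formula~\eqref{eq:projection formula} to rewrite $H^q(X, \mc{L}_n \otimes (\sigma^n)^*\mc{F}) \cong H^q(X, \sigma^n_*\mc{L}_n \otimes \mc{F})$, noting that $\sigma^n_*\mc{L}_n$ is locally free of finite rank by \cite[Proposition~4.5]{h2} since $\sigma$ is finite faithfully flat. Proposition~\ref{prop:ampleness equivalences} applied to the locally free sequence $\mc{E}_n = \sigma^n_*\mc{L}_n$ then gives that $\mc{F} \otimes \sigma^n_*\mc{L}_n$ is globally generated and that $\mc{H} \otimes \sigma^n_*\mc{L}_n$ is an ample locally free sheaf for $n \gg 0$. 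Using the projection formula in the reverse direction, these identify with $\sigma^n_*(\mc{L}_n \otimes (\sigma^n)^*\mc{F})$ and $\sigma^n_*(\mc{L}_n \otimes (\sigma^n)^*\mc{H})$. Applying $(\sigma^n)^*$ preserves global generation, and since $\sigma^n$ is finite surjective it preserves ampleness of locally free sheaves as well; finally the counit $(\sigma^n)^*\sigma^n_*\mc{G} \to \mc{G}$ is surjective for any $\mc{G}$ because $\sigma^n$ is affine. Consequently $\mc{L}_n \otimes (\sigma^n)^*\mc{F}$ is a globally generated quotient of a free sheaf, and $\mc{L}_n \otimes (\sigma^n)^*\mc{H}$ is a quotient of an ample locally free sheaf (and is itself invertible), hence is an ample invertible sheaf.

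The main obstacle is the descent step in claim~(\ref{claim2}): Proposition~\ref{prop:ampleness equivalences} only provides $(1)\Rightarrow(2),(3)$ in the locally free case, and the locally free sheaves supplied by the hypothesis are the pushforwards $\sigma^n_*\mc{L}_n$ rather than the original twisted products. Descending conclusions about $\sigma^n_*(-)$ to conclusions about $\mc{L}_n \otimes (\sigma^n)^*(-)$ requires assembling three standard facts about finite faithfully flat morphisms---surjectivity of the counit, preservation of ampleness under finite surjective pullback, and ampleness of quotients of ample locally free sheaves---but none is difficult in isolation.
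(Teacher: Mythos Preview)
Your proposal is correct and follows essentially the same route as the paper's proof: both reduce claim~(\ref{claim2}) to Proposition~\ref{prop:ampleness equivalences} applied to the locally free sequence $\mc{E}_n=\sigma^n_*\mc{L}_n$, using the projection formula~\eqref{eq:projection formula} together with affineness of $\sigma^n$, and then descend global generation via the surjective counit $(\sigma^n)^*\sigma^n_*(-)\to(-)$. The only cosmetic difference is that for the ampleness descent the paper cites \cite[Proposition~4.5]{h2} directly (ampleness of $f_*\mc{E}$ implies ampleness of $\mc{E}$ for $f$ finite faithfully flat), whereas you unpack this into ``pullback by finite surjective preserves ampleness'' plus ``quotients of ample locally free sheaves are ample''; these are the same argument.
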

\begin{proof}
The first claim follows immediately from Proposition~\ref{prop:ampleness equivalences}.

For the second claim, consider the sequence of coherent sheaves
$\mc{E}_n = \sigma^n_* (\mc{L}_n)$, which are locally free \cite[Proposition~4.5]{h2}. 
Then $\mc{E}_n$ satisfies \eqref{F-ample1}
of Proposition~\ref{prop:ampleness equivalences}
because of the isomorphism \eqref{eq:projection formula}
and the fact that push forward by a finite morphism preserves cohomology
\cite[Exercise~III.4.1]{h}.

Therefore $\sigma^n_* (\mc{L}_n \otimes (\sigma^n)^*\mc{F})$ is generated
by global sections
and $\sigma^n_*(\mc{L}_n \otimes (\sigma^n)^*\mc{H})$ is an ample
locally free sheaf for $n \gg 0$ by Proposition~\ref{prop:ampleness equivalences}. 
As in the proof of \cite[Proposition~4.5]{h2},
we can pull back 
\[
\oplus \Oh_x \to \sigma^n_* (\mc{L}_n \otimes (\sigma^n)^*\mc{F}) \to 0
\]
by $\sigma^n$ to get that
\[
(\sigma^n)^*\sigma^n_* (\mc{L}_n \otimes (\sigma^n)^*\mc{F})
\]
is generated by global sections. But since $\sigma^n$ is affine, the natural
map
\[
(\sigma^n)^*\sigma^n_* (\mc{L}_n \otimes (\sigma^n)^*\mc{F}) 
\to \mc{L}_n \otimes (\sigma^n)^*\mc{F}
\]
is surjective. Hence $\mc{L}_n \otimes (\sigma^n)^*\mc{F}$ is generated by
global sections.

Since  $\sigma^n_*(\mc{L}_n \otimes (\sigma^n)^*\mc{H})$ is an ample
locally free sheaf for $n \gg 0$, we have that
$\mc{L}_n \otimes (\sigma^n)^*\mc{H}$ is an ample invertible sheaf by
\cite[Proposition~4.5]{h2}.
\end{proof}

Due to Proposition~\ref{prop:ample sequence implies ample sheaf}, we will examine the ampleness
of the invertible sheaves $\mc{H} \otimes \mc{L}_n$ and
$\mc{L}_n \otimes (\sigma^n)^* \mc{H}$ for arbitrary
invertible sheaves $\mc{H}$. As in \cite{k}, the key idea is to 
examine the action of $\sigma^*$ on the numerical equivalence classes
of invertible sheaves, or equivalently Cartier divisors.

Let $\Num(X)=A^1_{\Num}(X)$ be
the set of Cartier divisors of
$X$, modulo numerical equivalence. We follow the work of
\cite[Sections~3-4]{k}.
Recall that $\Num(X) \cong \ZZ^\ell$, for some $\ell$, as a group and so
we can take
$P \in M_\ell(\ZZ)$ where $P$ is the action of $\sigma^*$ on $\Num(X)$
\cite[p.~305, Remark~3]{kl}.

\begin{lemma}\label{lem: eigenvalue greater than 1}
Let $X$ be a projective integral scheme
with $\sigma:X \to X$ finite. Let $P$
be the action of $\sigma^*$ on $\Num(X)$. Then 
the spectral radius $r$ of $P$
is an eigenvalue of $P$ and $r \geq 1$.
If the largest Jordan block containing $r$ is $m \times m$
and $\lambda$ is an eigenvalue with $|\lambda|=r$, then
any Jordan block containing $\lambda$ has size
less than or equal to $m \times m$.

If $X$ is normal and $\sigma$ is not an automorphism, then $r > 1$.
\end{lemma}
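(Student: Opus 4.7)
The plan is to realize $P$ as a cone-preserving operator on the nef cone in $\Num(X)\otimes\RR$ and apply a Perron-Frobenius theorem for such operators. First I would check that $P$ preserves the nef cone: for a nef class $[D]$ and an irreducible curve $C\subset X$, the projection formula gives $\sigma^*D\cdot C = D\cdot\sigma_*C$, and $\sigma_*C$ is either zero or a positive multiple of a curve, so $\sigma^*D$ is nef. The nef cone is a proper convex cone in $\Num(X)\otimes\RR$: closed and convex by definition; pointed, because if both $D$ and $-D$ are nef then $D\cdot C = 0$ for every curve $C$ and so $D$ is numerically trivial; and full-dimensional, because it contains the ample cone, which is nonempty (as $X$ is projective) and open in $\Num(X)\otimes\RR$.

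Once these preliminaries are in place, the Perron-Frobenius theorem for proper-cone-preserving operators (Vandergraft, or the finite-dimensional Krein-Rutman theorem) yields both that the spectral radius $r$ of $P$ is an eigenvalue and that, among all eigenvalues of modulus $r$, the maximal Jordan-block size is attained at $r$ itself. This already gives the first and third assertions. For $r\geq 1$, I would use the projection formula, which shows $\sigma_*\circ\sigma^* = (\deg\sigma)\cdot\id$ on $\Num(X)$, so $P$ is injective and hence an invertible integer matrix. Its determinant is a nonzero integer, so $|\det P|\geq 1$, and since $\det P$ equals the product of the complex eigenvalues of $P$, at least one eigenvalue must have modulus at least $1$, giving $r\geq 1$.

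The main obstacle is the final strict inequality $r>1$ when $\sigma$ is not an automorphism. My plan is to exploit the top self-intersection form $V(D) = D^{d}$, $d = \dim X$, which is a homogeneous polynomial of degree $d$ on $\Num(X)\otimes\RR$. Fix an ample divisor $H$; the projection formula gives $V(P^n[H]) = ((\sigma^n)^*H)^d = (\deg\sigma)^n\,H^d$ with $H^d>0$. On the other hand, fixing any norm on $\Num(X)\otimes\RR$, Jordan-form analysis of $P$ gives $\|P^n[H]\| = O(n^k r^n)$ for some $k\geq 0$, so $|V(P^n[H])| \leq C\|P^n[H]\|^d = O(n^{kd} r^{nd})$. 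Combining the two estimates yields $(\deg\sigma)^n \leq C' n^{kd} r^{nd}$ for all $n$, which forces $\deg\sigma \leq r^d$. Because $X$ is normal and $\sigma$ is finite but not an automorphism, Lemma~\ref{lem:  normal, int, finite deg 1=>auto} gives $\deg\sigma\geq 2$, so $r^d\geq 2$ and hence $r>1$.
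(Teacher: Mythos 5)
Your proposal is correct, and its first two parts coincide with the paper's argument: the paper also realizes $P$ as an operator preserving the nef cone (whose interior, the ample cone, is nonempty) and invokes Vandergraft's theorem \cite{vandergraft} for the eigenvalue and Jordan-block statements, and it also gets $r\geq 1$ from the fact that $P$ is an invertible integer matrix, so $|\det P|\geq 1$. Where you genuinely diverge is the strict inequality $r>1$. The paper argues by contradiction: if $r=1$ then $P\in\GL_\ell(\ZZ)$ has all eigenvalues on the unit circle, hence (by Kronecker's theorem, via \cite[Lemma~5.3]{av}) is quasi-unipotent; after passing to a power, $P=I+N$ with $N$ nilpotent, so $((P^mD)^{\dim X})$ grows polynomially in $m$, contradicting the exponential growth $((P^mD)^{\dim X})=(\deg\sigma)^m(D^{\dim X})$ with $\deg\sigma>1$. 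You instead bound the same quantity from above directly: since the top self-intersection form is homogeneous of degree $d=\dim X$ and $\|P^n[H]\|=O(n^k r^n)$ by the Jordan form, you get $(\deg\sigma)^n H^d\leq C' n^{kd} r^{nd}$, hence $\deg\sigma\leq r^d$ and $r\geq(\deg\sigma)^{1/d}>1$. Your route avoids the quasi-unipotence/roots-of-unity step and the reduction to a power of $P$, and it even yields a quantitative lower bound on $r$ rather than mere strictness; the paper's contradiction argument is slightly softer, needing only the dichotomy polynomial versus exponential growth, and stays closer to the machinery it reuses later (the same quasi-unipotence circle of ideas as in \cite{k}). Both rest on the same two external inputs: preservation of the nef cone under $\sigma^*$ and the degree formula $((\sigma^{*m}D)^{\dim X})=(\deg\sigma)^m(D^{\dim X})$, together with $\deg\sigma>1$ from Lemma~\ref{lem:  normal, int, finite deg 1=>auto}.
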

\begin{proof} 
We have that $\sigma$ is surjective by Lemma~\ref{lem:  int proj, finite=>surj}.
So $\sigma^*$ is an automorphism of $\Num(X)$ \cite[p.~331, Remark~1]{kl}.
Thus $P$ is invertible as a matrix (over $\QQ$).

Let $\ell$ be the rank of $\Num(X)$. Since 
$P \in M_\ell(\ZZ)$, we have that $\det(P) \in \ZZ$
and $|\det(P)| \geq 1$ since $P$ is invertible over $\QQ$.
Thus we have that the spectral radius $r \geq 1$.

Since $\sigma$ is surjective, $\sigma^*$ preserves the cone of nef divisors
in $\RR^\ell$
\cite[p.~303, Proposition~1]{kl}
and that cone has non-empty interior (the ample cone)
\cite[p.~325, Theorem~1]{kl}. 
Thus $r$ is an eigenvalue of $P$ and we also have
the claim regarding Jordan blocks \cite[Theorem~3.1]{vandergraft}.

Now suppose $X$ is normal and $\sigma$ is not an automorphism.
Then $\deg \sigma > 1$ by Lemma~\ref{lem:  normal, int, finite deg 1=>auto}.

Let $D$ be an ample divisor, or more generally a divisor
with self-intersection number $(D^{\dim X}) > 0$. 
Consider the integer valued function 
\[
g(m)=((\sigma^{*m}D)^{\dim X}) = ((P^m D)^{\dim X})
\] with $m \in \NN$.
By \cite[p.~299, Lemma~2, Proposition~6]{kl}, 
\[
((P^m D)^{\dim X}) = (\deg \sigma^m)(D^{\dim X})
= (\deg \sigma)^m (D^{\dim X}).
\] Since $\deg \sigma > 1$, we have
that $g$ grows exponentially.

Suppose for contradiction that $r=1$.
Then $\det(P) = \pm 1$ and
every eigenvalue has absolute value $1$. 
This implies that every eigenvalue is a root of unity
since $P \in \GL_\ell(\ZZ)$ \cite[Lemma~5.3]{av}.
That is, we say that $P$ is quasi-unipotent.
We may replace $P$ with a power of $P$ and assume
$P=I+N$ where $I$ is the identity matrix and
$N$ is a nilpotent matrix.
And so $g$ has polynomial growth 
since $P^mD = (I+N)^mD$ can be expressed as a 
``polynomial'' in $m$ with divisors as coefficients,
as in \cite[Equation~4.2]{k}.
Thus the self-intersection number is a polynomial with
integer coefficients.

But $g$ grows exponentially, so it must be that $r \neq 1$.
Thus $r > 1$. 
\end{proof}

Note that it is possible for an automorphism to yield
a spectral radius $r > 1$ \cite[Example~3.9]{k}.

The spectral radius $r$ is key to determining the ampleness of 
the appropriate sequences. When $\sigma$ is an automorphism,
we have an exact characterization.

\begin{proposition} \cite[Theorem~4.7, Corollary~5.1]{k}
Let $X$ be a projective integral scheme with automorphism
$\sigma$ and invertible sheaf $\mc{L}$. Define $\mc{B}_n, \mc{L}_n$
as in Definition~\ref{def:  the algebra}. Then $\mc{B}_n$
is right ample if and only if $\mc{B}_n$ is left ample.
This occurs if and only if $\mc{L}_n$ is an ample
invertible sheaf for $n \gg 0$ and $r=1$ with $r$ the spectral 
radius of the numerical action of $\sigma^*$. \qed
\end{proposition}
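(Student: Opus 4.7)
The plan is to reduce the bimodule ampleness conditions to commutative statements about the invertible sheaves $\mc{L}_n$, then analyze the numerical behavior of $\mc{L}_n$ via the integer matrix $P$ representing the action of $\sigma^*$ on $\Num(X)$.

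First, because $\sigma$ is an automorphism, the graph map $(1,\sigma^n): X \to X \times X$ is a closed immersion and pushforward along it preserves cohomology. Combined with the projection formula~\eqref{eq:projection formula}, this gives
\[
H^q(X, \mc{F} \otimes \mc{B}_n) \cong H^q(X, \mc{F} \otimes \mc{L}_n)
\quad \text{and} \quad
H^q(X, \mc{B}_n \otimes \mc{F}) \cong H^q(X, \mc{L}_n \otimes (\sigma^n)^* \mc{F}),
\]
so right ampleness of $\mc{B}_n$ is exactly condition~\eqref{F-ample1} of Proposition~\ref{prop:ampleness equivalences} for the invertible sequence $\mc{L}_n$. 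Since $(\sigma^n)^*$ is an autoequivalence on coherent sheaves that preserves cohomology, substituting $(\sigma^{-n})^*\mc{F}$ for $\mc{F}$ shows that left ampleness is condition~\eqref{F-ample1} for the sequence $\mc{L}_n^\prime := (\sigma^{-n})^*\mc{L}_n$. Proposition~\ref{prop:ampleness equivalences} then gives that each ampleness is equivalent to the stable-ampleness statement ``$\mc{H} \otimes \mc{L}_n$ (resp.\ $\mc{H} \otimes \mc{L}_n^\prime$) is ample for every invertible $\mc{H}$ and $n \gg 0$.''

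Next, I would work in $\Num(X) \otimes \RR \cong \RR^\ell$, where $\overline{\mc{L}_n} = (I + P + \cdots + P^{n-1})\overline{\mc{L}}$ and $\overline{\mc{L}_n^\prime} = (P^{-n} + \cdots + P^{-1})\overline{\mc{L}} = P^{-n}\overline{\mc{L}_n}$. Kleiman's criterion identifies ampleness with the open interior of the nef cone; hence the stable-ampleness statement is equivalent to $\overline{\mc{L}_n}$ being eventually uniformly bounded away from the boundary of the ample cone. When $r = 1$, Kronecker's theorem (as used in Lemma~\ref{lem: eigenvalue greater than 1}) forces $P$ to be quasi-unipotent, so after replacing $P$ with a power, $\overline{\mc{L}_n}$ grows as a vector-valued polynomial in $n$ with a stable dominant direction; once $\mc{L}_n$ is ample for large $n$, any bounded perturbation by $\mc{H}$ is absorbed. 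When $r > 1$, the equality $|\det P| = 1$ (forced by $\sigma$ being an automorphism) implies both expanding and contracting eigenspaces exist, and one selects $\mc{H}$ with a component along a contracting direction to produce a perturbation for which $\overline{\mc{L}_n \otimes \mc{H}}$ exits the ample cone along a subsequence of $n$. Specializing $\mc{H} = \Oh_X$ in the stable-ampleness statement recovers the condition that $\mc{L}_n$ is ample for $n \gg 0$.

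Finally, the left-right equivalence follows because $P$ is quasi-unipotent if and only if $P^{-1}$ is, so the $r=1$ condition is symmetric; under $r=1$, $P^{-n}$ has polynomial growth, so $\overline{\mc{L}_n^\prime}$ and $\overline{\mc{L}_n}$ have the same asymptotic positioning relative to the ample cone and yield the same ampleness conclusion. The main obstacle is the Jordan-form analysis in the preceding paragraph: explicitly constructing an $\mc{H}$ that obstructs uniform ampleness in the $r > 1$ case, and carefully controlling the distance of $\overline{\mc{L}_n}$ from the boundary of the ample cone in the polynomial-growth regime when $r = 1$.
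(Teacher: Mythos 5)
This proposition is not proved in the paper at all: it is quoted, with a citation, from \cite[Theorem~4.7, Corollary~5.1]{k}, so the only benchmark is the argument given there. Your framework matches that argument's setup and is fine as far as it goes: pushforward along the finite map $(1,\sigma^n)$ plus the projection formula identify right (resp.\ left) ampleness of $(\mc{B}_n)$ with condition~(\ref{F-ample1}) of Proposition~\ref{prop:ampleness equivalences} for the invertible sequences $\mc{L}_n$ (resp.\ $(\sigma^{-n})^*\mc{L}_n$), and since these are invertible, that condition is equivalent to the numerical statement that $\mc{H}\otimes\mc{L}_n$ is ample for $n\gg 0$ for every invertible $\mc{H}$; the observation that $r=1$ together with $\lvert\det P\rvert=1$ forces $P$ to be quasi-unipotent is also correct.

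However, the two steps you explicitly defer are precisely the mathematical content of \cite{k}, and the tactic you sketch for the $r>1$ direction would fail as stated. Writing $\mc{L}_n=\Oh_X(\Delta_n)$, choosing $\mc{H}$ with a component along a contracting eigendirection of $P$ on $\Num(X)\otimes\RR$ does not force $\Delta_n+H$ out of the ample cone, even along a subsequence: an open cone contains vectors with one bounded coordinate and the others arbitrarily large (e.g.\ $(n,1)$ lies in the open first quadrant for all $n$), so boundedness of one component proves nothing. The actual obstruction is dual. Since $\sigma^{-1}$ is also an automorphism, $P^{-1}$ preserves the nef cone, so its adjoint preserves the closed cone of curves, and Vandergraft's theorem \cite{vandergraft} gives a nonzero curve class $\theta$ in that cone with $P^{T}\theta=(1/r')\theta$, where $r'>1$ is the spectral radius of $P^{-1}$ (necessarily $>1$ because $\lvert\det P\rvert=1$ and $r>1$). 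Then $(\Delta_n.\theta)=\sum_{i<n}(1/r')^i(D.\theta)$ is bounded in $n$, so any integral $H$ with $(H.\theta)$ sufficiently negative gives $(\Delta_n+H.\theta)<0$ for every $n$, and Kleiman's criterion rules out ampleness of $\mc{H}\otimes\mc{L}_n$ for all $n$, not merely a subsequence. Likewise, in the quasi-unipotent case the claim that eventual ampleness of $\mc{L}_n$ ``absorbs'' every fixed $\mc{H}$ is not a consequence of polynomial growth alone; it is exactly the delicate estimate proved in \cite{k}, so as written you have recorded what must be proved rather than a proof. Finally, the left--right equivalence has a cleaner route than comparing asymptotics: $(\sigma^{-n})^*\mc{L}_n$ is the pullback of $\mc{L}_n$ along an automorphism, hence ample iff $\mc{L}_n$ is, so both one-sided conditions reduce to the same criterion ``$r=1$ and $\mc{L}_n$ ample for $n\gg 0$.''
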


This leaves us to consider the case of $r > 1$. To do so, we 
determine the growth of intersection numbers.

\begin{lemma}\label{lem: P^mD growth}
Let $X$ be a projective integral scheme with finite endomorphism $\sigma$. 
Let $D$ be a divisor. Let $P$
be the action of $\sigma^*$ on $\Num(X)$ with spectral radius $r$ and
largest Jordan block associated to $r$ of size $(j+1)\times (j+1)$.

Then
\begin{enumerate}
\item \label{lem: P^mD growth1} 
For all curves $C$, there exists $c_1 > 0$ such that
\[
(P^m D . C) \leq c_1 m^j r^m \text{ for all } m > 0.
\]
\item If $D$ is ample, then there exists a curve $C$ and $c_2 > 0$ such that
\[
(P^m D . C) \geq c_2 m^j r^m \text{ for all } m > 0.
\]
\end{enumerate}
\end{lemma}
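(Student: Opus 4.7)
The plan is to analyze $P^{m}D$ via Jordan decomposition of $P$ for the upper bound in~(1), and for the lower bound in~(2) to pass to the adjoint action $\sigma_{*}$ on numerical classes of 1-cycles, where ampleness of $D$ supplies a sign constraint.

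For part~(1), I would work with the Jordan canonical form of $P$ over $\CC$. By Lemma~\ref{lem: eigenvalue greater than 1} every Jordan block whose eigenvalue has modulus $r$ has size at most $(j+1)\times(j+1)$; blocks with $|\lambda|<r$ contribute strictly smaller exponential growth. The entries of $J^{m}$ for a block $\lambda I+N$ of size $k\le j+1$ are sums of $\binom{m}{i}\lambda^{m-i}$ for $0\le i<k$, each bounded by a constant times $m^{j}r^{m}$. Hence $\|P^{m}\|_{\mathrm{op}}\le c\, m^{j}r^{m}$ for some $c>0$ and all $m\ge 1$, and bilinearity of intersection gives $|(P^{m}D.C)|\le c_{1}m^{j}r^{m}$ with $c_{1}$ depending on $D$ and $C$.

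For part~(2), let $P_{*}=\sigma_{*}$ act on $N_{1}(X)\otimes\RR$, the space of 1-cycles modulo numerical equivalence. By the projection formula $(P^{m}D.C)=(D.P_{*}^{m}C)$, and since $P$ and $P_{*}$ are adjoint under intersection they share spectrum and Jordan block sizes. Because $\sigma$ is surjective, $P_{*}$ preserves the Mori cone $\overline{NE}(X)$, so Vandergraft's theorem (as used in Lemma~\ref{lem: eigenvalue greater than 1}) furnishes an $r$-eigenvector $u_{1}\in\overline{NE}(X)\setminus\{0\}$. Extend to a Jordan chain $u_{1},\dots,u_{j+1}$ for $r$, and let $\phi\colon N_{1}(X)\otimes\RR\to\RR$ be the linear functional reading off the $u_{j+1}$-coefficient. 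The leading $r$-contribution to $(D.P_{*}^{m}C)$ is $\phi(C)(D.u_{1})\binom{m}{j}r^{m-j}$, and $(D.u_{1})>0$ by Kleiman's criterion (ample $D$ pairs positively with every nonzero class of $\overline{NE}(X)$). Full-dimensionality of $\overline{NE}(X)$ (dual to the full-dimensional nef cone) forces some effective class outside $\ker\phi$; the positivity of $(D.P_{*}^{m}C)=((\sigma^{m})^{*}D.C)$ for each $m>0$ (pullback of ample by finite surjective is ample, paired with effective $C$) rules out $\phi<0$ on effective classes, so some effective cycle, and by decomposition some irreducible curve $C$, has $\phi(C)=\beta>0$. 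Combining the leading asymptotic $(P^{m}D.C)\sim(\beta(D.u_{1})/(j!\,r^{j}))m^{j}r^{m}$ with the forced positivity at each $m$ then yields a uniform $c_{2}>0$.

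The main obstacle is controlling eigenvalues $\lambda\neq r$ with $|\lambda|=r$ whose Jordan blocks may also reach size $j+1$: these contribute oscillating terms of order $m^{j}r^{m}$ which could, a~priori, disrupt both the asymptotic and the sign argument above. The remedy is either to choose $C$ approximating an effective class inside the $r$-generalized eigenspace of $P_{*}$ (where all other $|\lambda|=r$ contributions vanish), or to leverage the forced positivity of $(P^{m}D.C)$ at every $m$ - together with the integrality of $P\in M_{\ell}(\ZZ)$ and the resulting Galois structure of its eigenvalues - to bound the oscillations strictly below the $r$-leading coefficient $\beta(D.u_{1})/(j!\,r^{j})$.
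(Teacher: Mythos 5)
Your part~(1) is fine and is essentially the paper's argument: Jordan form, the bound on block sizes from Lemma~\ref{lem: eigenvalue greater than 1}, and bilinearity of intersection. The problem is part~(2), where the obstacle you name at the end --- eigenvalues $\lambda\neq r$ with $|\lambda|=r$ whose Jordan blocks also have size $j+1$ --- is a genuine gap, and neither proposed remedy closes it. You cannot ``choose $C$ approximating an effective class inside the $r$-generalized eigenspace of $P_*$'': the lemma requires an actual irreducible curve, and there is no reason any curve class lies in, or can be usefully approximated within, that generalized eigenspace. And positivity of $(P^mD.C)$ at every $m$ does not force a uniform bound $c_2m^jr^m$: a sequence of the shape $m^jr^m\bigl(\gamma+\text{oscillation of modulus }\le\gamma\bigr)$ can stay nonnegative while dipping to order $o(m^jr^m)$ along a subsequence (think of a factor like $1+\cos m\theta$), so even a strictly positive leading $r$-coefficient, which your averaging-style reasoning would at best make nonnegative, does not yield the claimed lower bound; the appeal to integrality and ``Galois structure'' is not an argument as written.

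The paper sidesteps the oscillation problem entirely, on the divisor side. Take the real Jordan chain $v_1,\dots,v_{j+1}$ for $r$ in $\Num(X)\otimes\RR$. Since $D$ is ample, hence interior to the nef cone, there is $\alpha>0$ with $\alpha D-v_{j+1}$ ample; since $\sigma$ is finite, $P^m(\alpha D-v_{j+1})$ is ample for all $m$, so $\alpha(P^mD.C)>(P^mv_{j+1}.C)$ for \emph{every} curve $C$ and every $m>0$. The point is that $P^mv_{j+1}=\binom{m}{j}r^{m-j}v_1+\dots+r^mv_{j+1}$ lives entirely in the span of the single real $r$-chain, so no other modulus-$r$ eigenvalues contribute and nothing oscillates. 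Choosing a curve $C$ with $(v_1.C)>0$ (possible after replacing the chain by its negative) gives $(P^mv_{j+1}.C)\ge c_3m^jr^m$ for $m\gg0$, hence for all $m>0$ after shrinking $c_3$, and $c_2=c_3/\alpha$ finishes the proof. If you want to salvage your dual approach, you would need an analogous comparison device; as it stands, the passage from ``positivity at each $m$'' to the uniform $m^jr^m$ lower bound is missing.
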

\begin{proof}
Let $v_1, v_2, \dots, v_\rho$ be a basis for $\Num(X) \otimes \CC$
associated to a Jordan canonical form of $P$, with an $(j+1) \times (j+1)$
block for $r$ in the upper left. Since $r$ is real, the vectors
$v_1, v_2, \dots, v_{j+1}$ have real components.

Applying $P^m$ to $v_{i+1}$ for $i=0, \dots, j$ gives
\begin{equation}\label{eq: P^m growth}
P^m v_{i+1} = \binom{m}{i} r^{m-i} v_1 + \binom{m}{i-1} r^{m-i+1}v_2 + \dots + r^m v_{i+1}.
\end{equation}
Recall that every eigenvalue $\lambda$ of $P$ has $|\lambda| < r$ or
the Jordan blocks of $\lambda$ are no larger than $(j+1)\times (j+1)$
by Lemma~\ref{lem: eigenvalue greater than 1}. 
Thus for all $i =1, \dots, \rho$, the absolute values of the scalar coefficients of 
$P^m v_i$ cannot grow faster than $r^{m-j} \binom{m}{j}$.

Let $D = \sum a_i v_i$. Since $D \in \Num(X)$, we have $a_i \in \RR$
whenever $v_i$ corresponds to a real eigenvalue. If $v_i$ corresponds
to a complex eigenvalue $\lambda$, then the complex conjugate $\overline{\lambda}$
is also an eigenvalue with corresponding vector $\overline{v_i}$
and coefficient $\overline{a_i}$. Letting $v_i' = a_i v_i + \overline{a_i v_i}$ we
write $D$ as the sum of real vectors. Considering $P^mD$, we see that
no scalar coefficient of a $v_i$ or $v_i'$ can grow faster than $r^{m-j} \binom{m}{j}$
by Equation~\eqref{eq: P^m growth}.
Thus intersecting with any curve $C$ we have $(P^m D.C) \leq c_1 m^j r^m$
for some $c_1 > 0$ and all $m > 0$. (We can choose $c_1$ large enough
to handle the case of small $m$.)

Now suppose $D$ is ample. Then $D$ is an element of the interior
of the nef cone \cite[p.~325, Theorem~1]{kl}
and so there exists $\alpha > 0$ such that $\alpha D - v_{j+1}$ is
an element of the ample cone \cite[p.~1209]{vandergraft}.
Since $\sigma$ is finite, $P^m(\alpha D - v_{j+1})$ is ample for all $m \geq 0$
\cite[Proposition~4.3]{h2}.
Thus for all curves $C$ and all $m > 0$, $\alpha(P^m D.C) > (P^m v_{j+1}.C)$
since the intersection number of an ample divisor and any curve is positive.

Since $v_1 \neq 0 \in \Num(X)$, there exists a curve $C$ such that
$(v_1.C) \neq 0$. Replacing $v_i$ with $-v_i$ if necessary, we can assume
$(v_1.C) > 0$. Then by Equation~\eqref{eq: P^m growth}, we have
$\alpha(P^m D. C) > c_3 m^j r^m$ for some $c_3 > 0$ and all $m \gg 0$. 
We can choose $c_3$ small enough to make the inequality true for all $m > 0$.
Then take $c_2 = c_3/\alpha$ to complete the proof.
\end{proof}

While the lemma above holds for $r=1$, we have different behavior
for the intersection numbers of sums of $P^i D$, depending on $r$. For $r=1$,
the intersection numbers $(D+PD+\dots+P^{m-1}D.C)$ grow at most like the
polynomial $m^{j+1}$ \cite[Equation~(4.3)]{k}. However, when $r > 1$,
these numbers grow at most like $m^j r^m$.

\begin{lemma}\label{lem: sum growth}
Use the hypotheses of Lemma~\ref{lem: P^mD growth} with $D$ any divisor. 
Suppose $r > 1$. Then for all curves $C$, there exists $c_3 > 0$ such that
\[
(D + PD + P^2 D + \dots + P^{m-1}D.C) \leq c_3 m^j r^m\text{ for all } m > 0.
\]
\end{lemma}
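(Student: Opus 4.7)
The plan is to reduce the statement to Lemma~\ref{lem: P^mD growth}(\ref{lem: P^mD growth1}) and then sum the resulting term-by-term bounds using a geometric series estimate that is harmless precisely because $r > 1$.

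First, I would fix the curve $C$ and apply Lemma~\ref{lem: P^mD growth}(\ref{lem: P^mD growth1}) to the divisor $D$, obtaining a constant $c_1 > 0$ such that $(P^i D . C) \leq c_1 i^j r^i$ for every $i \geq 1$. The $i=0$ contribution is just the fixed integer $(D.C)$, which I will absorb into the final constant at the end. (Note that no positivity assumption on $D$ is needed: Lemma~\ref{lem: P^mD growth}(\ref{lem: P^mD growth1}) gives a one-sided upper bound for arbitrary $D$.)

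Next, I would add the bounds. Since $r > 1$, the standard geometric series estimate gives
\[
\sum_{i=1}^{m-1} i^j r^i \;\leq\; m^j \sum_{i=1}^{m-1} r^i \;\leq\; \frac{m^j r^m}{r-1},
\]
so
\[
\left(\sum_{i=0}^{m-1} P^i D \,.\, C\right) \;\leq\; (D.C) + \frac{c_1\, m^j r^m}{r-1}.
\]
Because $r > 1$, the quantity $m^j r^m$ is at least $1$ for every $m \geq 1$, so the additive constant $(D.C)$ can be absorbed by enlarging $c_1/(r-1)$ to a single constant $c_3 > 0$. This yields the claimed bound.

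I do not expect any serious obstacle: the content of the lemma is really just that Lemma~\ref{lem: P^mD growth}(\ref{lem: P^mD growth1}) aggregates well under summation when $r > 1$. The only subtlety worth flagging is the contrast with the $r=1$ case referenced immediately before the statement, where summing forces an extra factor of $m$ and one obtains $m^{j+1}$ growth; here the geometric factor $r^i$ makes the sum comparable to its largest term, which is why the exponent on $m$ does not increase.
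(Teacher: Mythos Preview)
Your proposal is correct and follows essentially the same route as the paper: apply Lemma~\ref{lem: P^mD growth}(\ref{lem: P^mD growth1}) term by term and then bound $\sum_{i} i^j r^i$ by a constant times $m^j r^m$. The paper obtains that last bound by differentiating the geometric series $j$ times rather than via your cruder estimate $i^j \leq m^j$, but this is a cosmetic difference.
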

\begin{proof} Consider $\sum_{i=0}^{m-1} r^i = \frac{r^m-1}{r-1}$.
By differentiating $j$ times with respect to $r$, we see that
$\sum_{i=0}^{m-1} i^j r^i$ is bounded above by $c_4 m^j r^m$ for some
$c_4 > 0$. The claim then follows from
Lemma~\ref{lem: P^mD growth}\eqref{lem: P^mD growth1}.
\end{proof}

We can now prove that left ampleness never occurs when $\sigma$ is not an automorphism
and $X$ is normal.
For simplicity, define
\[
\Delta_m = D + \sigma^* D + (\sigma^*)^2 D + \dots + (\sigma^*)^{m-1}D.
\]

\begin{proposition}\label{prop: non left ample divisors}
Let $X$ be a projective integral scheme with $\sigma: X \to X$ finite.
Let $P$ be the action of $\sigma^*$ on $\Num(X)$ and let $r$ be the
spectral radius of $P$, with $r > 1$.
Let $D$ be a divisor on $X$. Then there exists an ample divisor $H$ such that
$\Delta_m - (\sigma^m)^*H$
is not ample for all $m > 0$.
\end{proposition}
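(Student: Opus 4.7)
The plan is to compare the growth rates of $(\Delta_m . C)$ and $((\sigma^m)^*H . C) = (P^m H . C)$ along a well-chosen curve $C$, and exploit the fact that we can scale $H$. Both Lemmas~\ref{lem: P^mD growth} and \ref{lem: sum growth} tell us that the natural growth rate on both sides is $m^j r^m$; the point is that if we start from an ample $H_0$, the lower bound for $(P^m H_0 . C)$ can be boosted to any multiple we want by replacing $H_0$ with $nH_0$, while the upper bound for $(\Delta_m . C)$ depends only on $D$ and $C$, so stays fixed.

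Concretely, I would first fix any ample divisor $H_0$ on $X$. Applying Lemma~\ref{lem: P^mD growth}(2) with $D$ replaced by $H_0$, I obtain a curve $C$ and a constant $c_2 > 0$ so that
\[
(P^m H_0 . C) \;\geq\; c_2 \, m^j r^m \qquad \text{for all } m > 0.
\]
For this same curve $C$, Lemma~\ref{lem: sum growth} applied to the original divisor $D$ produces a constant $c_3 > 0$ (depending on $D$ and $C$, but not on $H_0$) such that
\[
(\Delta_m . C) \;\leq\; c_3\, m^j r^m \qquad \text{for all } m > 0.
\]

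Now I choose a positive integer $n$ with $n c_2 > c_3$ and set $H := nH_0$. Then $H$ is still ample, and intersecting with $C$ gives
\[
\bigl(\Delta_m - (\sigma^m)^*H \, . \, C\bigr)
\;=\; (\Delta_m . C) - n(P^m H_0 . C)
\;\leq\; (c_3 - n c_2)\, m^j r^m \;<\; 0
\]
for every $m > 0$. A divisor with strictly negative intersection against an effective curve cannot be nef, hence cannot be ample; this contradicts ampleness of $\Delta_m - (\sigma^m)^*H$ for every $m > 0$, as required.

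There is no serious obstacle here; the proof is essentially a bookkeeping matching of the two growth estimates already established. The only subtle point is ordering the choices correctly: the curve $C$ must be chosen using Lemma~\ref{lem: P^mD growth}(2) first, since that lemma only guarantees the existence of \emph{some} such curve for the ample divisor, and then the constant $c_3$ from Lemma~\ref{lem: sum growth} for $D$ is produced relative to that particular $C$. Once the curve is fixed, scaling $H_0$ to $nH_0$ scales only the lower bound, which is what allows us to dominate the upper bound uniformly in $m$.
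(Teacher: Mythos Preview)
Your proof is correct and follows essentially the same argument as the paper: fix an ample divisor, use Lemma~\ref{lem: P^mD growth}(2) to choose a curve $C$ and constant $c_2$, then use Lemma~\ref{lem: sum growth} along that same $C$ to bound $(\Delta_m.C)$ by $c_3 m^j r^m$, and finally replace $H$ by an integer multiple so that the lower bound beats the upper bound. Your remark about the order of choices is exactly the point, and the paper handles it in the same way.
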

\begin{proof}
Let $H$ be an ample divisor. Choose a curve $C$ and $c_2 > 0$
 such that $(P^m H.C) \geq c_2 m^j r^m$ for all $m > 0$,
  with $j$ as in Lemma~\ref{lem: P^mD growth}.
Then there exists $c_3 > 0$ such that $(\Delta_m . C) \leq c_3 m^j r^m$
for all $m > 0$
by Lemma~\ref{lem: sum growth}.
Replacing $H$ with an integer multiple, we can assume $c_2 > c_3$. 
Thus $(\Delta_m - (\sigma^*)^mH. C) < 0$ for all $m > 0$.
Since ample divisors have positive intersection with every curve, we are done.
\end{proof}

Combining Propositions~\ref{prop:ample sequence implies ample sheaf} and
\ref{prop: non left ample divisors} we immediately have the following. 
Recall that if $X$ is normal and $\sigma$ is not an automorphism, then $r > 1$ by
Lemma~\ref{lem: eigenvalue greater than 1}. 
Note that by Corollary~\ref{cor:  normal, int, finite=> faithfully flat}, the
following holds for any finite endomorphism when $X$ is regular.

\begin{corollary}\label{cor: not left ample}
Let $X$ be a projective scheme with finite faithfully flat endomorphism $\sigma$.
Let $P$ be the action of $\sigma^*$ on $\Num(X)$ and let $r$ be the
spectral radius of $P$, with $r > 1$. Then $\mc{B}_n$ is not left ample,
where $\mc{B}_n$ is as in Definition~\ref{def:  the algebra} with $\mc{L}$ an
invertible $\Oh_X$-module. \qed
\end{corollary}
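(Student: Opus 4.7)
The plan is to derive a contradiction by stacking the two preceding propositions. I would assume $\mc{B}_n$ is left ample. Then Proposition~\ref{prop:ample sequence implies ample sheaf}\eqref{claim2} immediately gives: for every invertible sheaf $\mc{H}$ on $X$, the twisted invertible sheaf $\mc{L}_n \otimes (\sigma^n)^*\mc{H}$ is ample for all $n \gg 0$. (This is where the faithful flatness hypothesis of the corollary gets used, by way of that proposition.)

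The next step is to translate this statement into the additive language of Proposition~\ref{prop: non left ample divisors}. Let $D$ be a Cartier divisor representing $\mc{L}$, so that $\mc{L}_n$ corresponds to $\Delta_n = D + \sigma^*D + \cdots + (\sigma^{n-1})^*D$, and let $H'$ be a divisor representing $\mc{H}$. The hypothesis then forces $\Delta_n + (\sigma^n)^*H'$ to be ample for every divisor class $H'$ and every sufficiently large $n$.

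Finally, because $r > 1$, Proposition~\ref{prop: non left ample divisors} applied to $D$ supplies an ample divisor $H$ such that $\Delta_n - (\sigma^n)^*H$ is \emph{not} ample for any $n > 0$. Specializing the previous paragraph to $H' = -H$ (that is, $\mc{H} = \Oh_X(-H)$) produces the contradiction and completes the proof.

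The only real obstacle is bookkeeping: converting the tensor-product formulation of left ampleness into the additive Picard-group formulation used by Proposition~\ref{prop: non left ample divisors}, being careful with the sign on $H'$ versus $H$, and observing that the hypotheses of Proposition~\ref{prop: non left ample divisors} (in particular, $\sigma$ finite) are available because a finite faithfully flat endomorphism is, by definition, finite. No genuinely new computation is required.
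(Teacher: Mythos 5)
Your proposal is correct and matches the paper's argument: the paper proves this corollary precisely by combining Proposition~\ref{prop:ample sequence implies ample sheaf}\eqref{claim2} with Proposition~\ref{prop: non left ample divisors}, exactly as you do, with the choice $\mc{H} = \Oh_X(-H)$ supplying the contradiction. No further comment is needed.
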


It is however possible for the sequence $\mc{B}_n$ to be right ample
when $\sigma$ is not an automorphism. This is in contrast to the automorphism case,
where left and right ampleness are equivalent \cite[Corollary~5.1]{k}.
Note that we do not need $\sigma$ to be faithfully flat due to the equivalences of
Proposition~\ref{prop:ampleness equivalences}.

\begin{proposition}\label{prop: ample eigenvector}
Let $X$ be a projective integral scheme with finite endomorphism $\sigma$
and $P$ the action of $\sigma^*$ on $\Num(X)$. Suppose there exists
an ample invertible sheaf $\mc{L} = \Oh_X(D)$ such that $D$ is an eigenvector
of $P$. Then the sequence $\mc{B}_n = {}_1(\mc{L}_n)_{\sigma^n}$ is
right ample.
\end{proposition}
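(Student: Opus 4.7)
The plan is to reduce right ampleness of $\mc{B}_n$ to an assertion purely about invertible sheaves, and then exploit the eigenvector hypothesis to make that assertion follow from the fact that the ample cone is open.

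First, I would use the observation recorded in the paragraph preceding Proposition~\ref{prop:ampleness equivalences}: since $(1,\sigma^n)$ is finite, $H^q(X,\mc{F}\otimes\mc{B}_n)=H^q(X,\mc{F}\otimes\mc{L}_n)$. Thus right ampleness of $\mc{B}_n$ is exactly condition~\eqref{F-ample1} of Proposition~\ref{prop:ampleness equivalences} for the \emph{invertible} sequence $\mc{E}_n=\mc{L}_n$. Because the three conditions are equivalent for invertible sheaves, it suffices to establish condition~\eqref{ample}: for every invertible sheaf $\mc{H}$, the invertible sheaf $\mc{H}\otimes\mc{L}_n$ is ample for $n\gg 0$.

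Next, I would translate to numerical classes. Write $\mc{H}=\Oh_X(E)$ and, by the eigenvector hypothesis, $\sigma^*D\equiv\lambda D$ in $\Num(X)$ for some scalar $\lambda$. Iterating, $(\sigma^i)^*D\equiv\lambda^i D$, so
\[
\Delta_n:=D+\sigma^*D+\cdots+(\sigma^{n-1})^*D\equiv c_nD,\qquad c_n=1+\lambda+\cdots+\lambda^{n-1}.
\]
Hence $\mc{H}\otimes\mc{L}_n$ represents the numerical class $E+c_nD$. The key step is to show $\lambda\geq 1$, which forces $c_n\to\infty$. For this I would use the degree formula from \cite[p.~299]{kl}: because $\sigma$ is finite and $D$ is ample (so $(D^{\dim X})>0$),
\[
\lambda^{\dim X}(D^{\dim X})=((\sigma^*D)^{\dim X})=(\deg\sigma)(D^{\dim X}),
\]
whence $\lambda^{\dim X}=\deg\sigma\geq 1$ and therefore $\lambda\geq 1$ (with $\lambda>0$ in any case since both $D$ and $\sigma^*D$ lie in the ample cone).

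Finally, since $D$ lies in the interior of the ample cone in $\Num(X)\otimes\RR$ (which is an open cone by \cite[p.~325, Theorem~1]{kl}), there is some $\epsilon>0$ such that $D+tE$ is ample for $|t|<\epsilon$. Writing $E+c_nD=c_n(D+E/c_n)$ and noting $c_n\to\infty$, we see that $E+c_nD$ is an ample class for $n\gg 0$, so $\mc{H}\otimes\mc{L}_n$ is an ample invertible sheaf for $n\gg 0$. Applying Proposition~\ref{prop:ampleness equivalences} yields the cohomological vanishing and hence right ampleness of $(\mc{B}_n)$. I expect the only real subtlety to be the justification that $\lambda\geq 1$; once the degree identity is in hand, the rest is a short calculation with the openness of the ample cone.
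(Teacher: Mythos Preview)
Your proof is correct and follows essentially the same strategy as the paper: reduce via Proposition~\ref{prop:ampleness equivalences} to showing that $\mc{H}\otimes\mc{L}_n$ is ample for $n\gg 0$, then use the eigenvector hypothesis together with the openness of the ample cone to conclude. The only difference is in bounding the eigenvalue: the paper invokes Lemma~\ref{lem: P^mD growth} to identify $\lambda$ with the spectral radius $r$ (hence $r\geq 1$ by Lemma~\ref{lem: eigenvalue greater than 1}), whereas you obtain $\lambda^{\dim X}=\deg\sigma\geq 1$ directly from the degree formula for intersection numbers, which is a slightly more self-contained route to the same inequality.
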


\begin{proof}
Note that due to Lemma~\ref{lem: P^mD growth}, the eigenvalue related to $D$ must
be the spectral radius $r$. 
For any divisor $H$, there exists $\alpha_0 \in \RR$ 
such that $\alpha D+H$ is in the
ample cone for all $\alpha \geq \alpha_0$ \cite[p.~1209]{vandergraft}. 
Thus $\Delta_m + H$ is ample for all $m \gg 0$. 
So by Proposition~\ref{prop:ampleness equivalences}, the sequence $\mc{B}_n$
is right ample.
\end{proof}

Note that since $D \in \ZZ^\ell$, we must have $r \in \QQ$.
Also, we have self-intersection number \[
(\sigma^*D)^{\dim X} = r^{\dim X} (D)^{\dim X} = (\deg \sigma)(D)^{\dim X}
\]
\cite[p.~299, Proposition~6]{kl}.
Since $X$ is integral, $\deg \sigma \in \NN$. Thus $r \in \NN$.
In fact, it is known that when an ample divisor is an eigenvector,
 we can write $P= r (I \oplus O)$ where
$I$ is the $1 \times 1$ identity matrix
 and $O$ is an orthogonal matrix with $\QQ$ coefficients
\cite[p.~330, Proposition~3]{kl}. But we will not need this.

One important example of a finite endomorphism
satisfying Proposition~\ref{prop: ample eigenvector} is the relative
Frobenius endomorphism $f$ for characteristic $p > 0$. 
In this case, $P$ is just the scalar matrix given by multiplication by $p$
\cite[Lemma~2.4]{AraK}.
In the next section we examine the twisted ring obtained from
the Frobenius endomorphism on $\mb{P}^m$.

It is also trivial that Proposition~\ref{prop: ample eigenvector} holds when
$\Pic(X) \cong \ZZ$. 

We end this section by showing that the bimodule algebras obtained from
non-automorphisms are not noetherian.

\begin{theorem}\label{thm: nonnoetherian ring}
Let $X$ be a projective normal integral scheme
with finite endomorphism $\sigma$, with $\sigma$ not an automorphism.
Let $\mc{L}$ be an invertible $\Oh_X$-module and define $\mc{B}_n$
and $B = \oplus \Gamma(\mc{B}_n)$ as in Definition~\ref{def:  the algebra}.
Suppose $\mc{B}_n$ is a right ample sequence. Then $B$ is neither left
nor right noetherian.
\end{theorem}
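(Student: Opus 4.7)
The plan is to prove non-noetherianity by exhibiting a non-finitely-generated graded right ideal of $B$, and symmetrically a non-finitely-generated graded left ideal. I focus on the right case.

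\textbf{Setup and construction.} Since $\sigma$ is not an automorphism and $X$ is normal, Lemma~\ref{lem: eigenvalue greater than 1} gives $r>1$ for the spectral radius of $\sigma^*$ on $\Num(X)$, and Lemma~\ref{lem:  normal, int, finite deg 1=>auto} gives $\deg\sigma>1$. Right ampleness and Proposition~\ref{prop:ample sequence implies ample sheaf} make $\mc{L}_n$ ample, indeed very ample, for $n\gg 0$. Fix a closed point $x\in X$ at which $\mc{L}_n$ is globally generated for all $n\gg 0$, and set
\[
V_n(x) := \ker\bigl(B_n\twoheadrightarrow \mc{L}_n\vert_x\bigr), \quad V(x) := \bigoplus_n V_n(x).
\]
This is a graded right ideal because $(s\cdot b)(x) = s(x)\cdot b(\sigma^n(x)) = 0$ for $s\in V_n(x)$, $b\in B_k$, and $\dim V_n(x) = \dim B_n - 1$ for $n\gg 0$.

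\textbf{Dimension inequality.} Suppose for contradiction $V(x)$ is generated by $s_1,\dots,s_\rho$ with $s_i\in V_{n_i}(x)$, $n_i\geq 1$ (since $V_0(x)=0$). For $n>\max n_i$, $V_n(x)=\sum_i s_i\cdot(\sigma^{n_i})^* B_{n-n_i}$. Since $\sigma$ is surjective (Lemma~\ref{lem:  int proj, finite=>surj}) the pullback $(\sigma^{n_i})^*$ is injective on global sections, and $B$ is a domain so multiplication by $s_i\neq 0$ is injective; hence $\dim V_n(x)\leq\sum_i \dim B_{n-n_i}$. Combining Lemma~\ref{lem: P^mD growth} applied to top self-intersection numbers with asymptotic Riemann--Roch gives $\dim B_n$ of asymptotic order $r^{n\dim X}$, forcing the necessary condition $\sum_i r^{-n_i\dim X}\geq 1$.

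\textbf{Main obstacle and structural refinement.} This dimension inequality alone can be formally met with finitely many generators, so it does not by itself contradict the assumption; upgrading it to an actual contradiction is the main obstacle. The plan is structural: each summand $s_i\cdot(\sigma^{n_i})^*B_{n-n_i}$ consists of sections whose restriction to any fiber of $\sigma^{n_i}$ is proportional to the restriction of $s_i$ (``fiber-constancy'' of the pulled-back factor). Since $\deg\sigma>1$, for $N=\max n_i$ a generic fiber of $\sigma^N$ has at least $\rho+1$ distinct closed points when $\sigma$ is generically \'etale; in the purely inseparable case (e.g.\ Frobenius), the analogous statement holds with scheme-theoretically thick fibers and a jet-theoretic substitute replacing point-evaluation. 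A pigeonhole argument applied to $y\mapsto(s_1(y),\dots,s_\rho(y))$ on the fiber then produces $y_1\neq y_2$ in a common fiber of $\sigma^N$ with $s_i(y_1)=s_i(y_2)$ for all $i$. The linear functional $s\mapsto s(y_1)-s(y_2)$ vanishes identically on $\sum_i s_i\cdot(\sigma^{n_i})^*B_{n-n_i}$ (the pulled-back $b_i$ takes equal values at $y_1$ and $y_2$), but by very-ampleness of $\mc{L}_n$ for $n\gg 0$ the evaluation map $\Gamma(\mc{L}_n)\twoheadrightarrow \mc{L}_n\vert_x\oplus\mc{L}_n\vert_{y_1}\oplus\mc{L}_n\vert_{y_2}$ is surjective, so the functional does not vanish on all of $V_n(x)$. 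This contradicts the assumed decomposition and shows $V(x)$ is not finitely generated, so $B$ is not right noetherian.

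\textbf{Left case.} For non-left-noetherianity I take $W := \bigoplus_n \Gamma(\mc{I}_Y\otimes \mc{L}_n)$ for a proper $\sigma$-invariant closed subset $Y\subsetneq X$. Then $W$ is a two-sided, in particular left, ideal: $(b\cdot s)(y)=b(y)\cdot s(\sigma^k(y))=0$ for $y\in Y$, using $\sigma^k(y)\in Y$ by $\sigma$-invariance. Such $Y$ arises, for instance, as the closure of the forward $\sigma$-orbit of a $\sigma$-periodic closed point---existence of such periodic points follows from a Lefschetz-style intersection argument on the iterated graph of $\sigma$ inside $X\times X$, and the orbit closure is proper as long as the periodic point is chosen with non-dense orbit. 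The symmetric dimension-plus-structure argument, now using $b\cdot s = b\cdot(\sigma^k)^* s$ and replacing the functional $s(y_1)-s(y_2)$ by its analogue at a pair of points in $Y$ selected by the same pigeonhole, shows $W$ is not finitely generated as a left ideal.
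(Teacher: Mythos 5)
The central step of your right-ideal argument has a genuine gap. Granting your setup, non-finite generation of $V(x)$ forces you to rule out $V_n(x)=\sum_i s_i\cdot(\sigma^{n_i})^*B_{n-n_i}$, and your mechanism for this is the ``pigeonhole'' producing $y_1\neq y_2$ in a common fiber with $s_i(y_1)=s_i(y_2)$ (really: with all pairs $(s_i(y_1),s_i(y_2))$ lying on one line, so that a single functional $\lambda_1 s(y_1)+\lambda_2 s(y_2)$ kills every summand). No counting argument gives this: the values lie in an infinite field, the fiber of $\sigma^N$ has only $(\deg\sigma)^N$ points, and $\rho$ (the number of generators) has no relation to $(\deg\sigma)^N$ --- a generating set may contain all of $V_1(x),\dots,V_N(x)$, so $\rho$ can dwarf the fiber size. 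There is also a directional error: with $N=\max n_i$, two points in a common fiber of $\sigma^N$ need \emph{not} lie in a common fiber of $\sigma^{n_i}$ for $n_i<N$ (only the converse holds), so the claimed fiber-constancy of $b_i(\sigma^{n_i}(\cdot))$ fails; you would need $N=\min n_i$. Finally, the purely inseparable case is deferred to an unspecified ``jet-theoretic substitute.'' Since in the automorphism case such ideals \emph{are} finitely generated (the twisted ring is noetherian when $\mc{L}$ is $\sigma$-ample), any correct proof must use $\deg\sigma>1$ in an essential, verifiable way, and as written yours does not. The left case inherits the same unproved mechanism and additionally assumes a proper closed $\sigma$-invariant $Y\subsetneq X$ (a periodic point with non-dense orbit), whose existence over an arbitrary base field is asserted via an unproven ``Lefschetz-style'' argument.

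Note that your own growth estimate already contains the proof if you use it differently: the paper shows $\dim_k B_n$ grows exponentially --- right ampleness applied to $0\to\mc{I}\to\Oh_X\to\Oh_C\to 0$ gives surjections $\Gamma(\mc{B}_n)\twoheadrightarrow\Gamma(\Oh_C\otimes\mc{B}_n)$ for a curve $C$ chosen via Lemma~\ref{lem: P^mD growth} so that $(P^nD.C)\geq cr^n$ with $r>1$ from Lemma~\ref{lem: eigenvalue greater than 1}, and Riemann--Roch on $C$ gives exponential growth --- and then invokes \cite[Theorem~0.1]{sz}, which says a graded noetherian algebra has subexponential growth, killing left and right noetherianity simultaneously. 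Replacing your ideal construction by this citation (or proving a substitute for it) is what is needed; the explicit non-finitely-generated ideals are neither necessary nor, as argued, established.
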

\begin{proof}
Let $\mc{L} = \Oh_X(D)$ and let $P$ be the action of $\sigma^*$ on $\Num(X)$, with
spectral radius $r$. We have $r > 1$ by Lemma~\ref{lem: eigenvalue greater than 1}.

By Lemma~\ref{lem: P^mD growth}, 
choose a curve $C$ such that $(P^n D.C) \geq cr^n$ for some $c > 0$.
We have an exact sequence of coherent sheaves
\[
0 \to \mc{I} \to \Oh_X \to \Oh_C \to 0.
\]
Tensoring with $\mc{B}_n$ on the right, we have
$H^1(X, \mc{I} \otimes \mc{B}_n)=0$ for $n \gg 0$. 
Thus $\Gamma(\mc{B}_n) \to \Gamma(\Oh_C \otimes \mc{B}_n)$ is surjective
for $n \gg 0$. By the Riemann-Roch formula for curves,
$\Gamma(\Oh_C \otimes \mc{B}_n)$ grows exponentially and hence so
does $\Gamma(\mc{B}_n)$.

Since $B$ has exponential growth, $B$ is neither left nor right noetherian
by \cite[Theorem~0.1]{sz}.
\end{proof}

We also find that the bimodule algebra $\mc{B}$ is not right noetherian.
See \cite{vdb} for details on the category of 
right $\mc{B}$-modules.

\begin{corollary}\label{cor: nonnoetherian bimodule algebra}
Let $X$ be a regular projective integral scheme over an algebraically
closed field $k$. Let $\sigma$ be a finite endomorphism
which is not an automorphism. Define $\mc{B}_n$ and $\mc{B}$ 
as in Definition~\ref{def:  the algebra}.
Suppose $\mc{B}_n$ is a right ample sequence. Then $\mc{B}$ is not
right noetherian in the category of right $\mc{B}$-modules.
\end{corollary}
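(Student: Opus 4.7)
The plan is to argue by contradiction with Theorem~\ref{thm: nonnoetherian ring}. Suppose $\mc{B}$ is right noetherian in the category of right $\mc{B}$-modules. Since $\mc{B}$ is a cyclic right $\mc{B}$-module, generated by the unit $1 \in \mc{B}_0$, the hypothesis forces every ascending chain of right $\mc{B}$-submodules of $\mc{B}$ to stabilize. I will obtain a contradiction by lifting the failure of right noetherianity of the section ring $B = \bigoplus_n \Gamma(\mc{B}_n)$ from Theorem~\ref{thm: nonnoetherian ring} to a non-stabilizing chain of right $\mc{B}$-submodules of $\mc{B}$.

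Concretely, fix an infinite strictly ascending chain of graded right ideals $J_1 \subsetneq J_2 \subsetneq \cdots$ in $B$, which exists since $B$ is not right noetherian. For each $i$, form the right $\mc{B}$-submodule $\mc{J}_i \subseteq \mc{B}$ whose $n$-th graded piece is
\[
(\mc{J}_i)_n \;=\; \operatorname{Im}\Bigl(\bigoplus_{m = 0}^{n} (J_i)_m \otimes_k \mc{B}_{n-m} \longrightarrow \mc{B}_n\Bigr),
\]
where $(J_i)_m \subseteq \Gamma(\mc{B}_m)$ acts via the bimodule algebra multiplication. Since $J_i \cdot B \subseteq J_i$, the collection $\mc{J}_i$ is closed under right multiplication by $\mc{B}$, and the inclusions $J_i \subseteq J_{i+1}$ produce an ascending chain $\mc{J}_1 \subseteq \mc{J}_2 \subseteq \cdots$ of right $\mc{B}$-submodules of $\mc{B}$. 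By construction, $(J_i)_n \subseteq \Gamma((\mc{J}_i)_n)$ for every $i$ and $n$. The goal now is to invoke right ampleness to ensure that this chain is strictly ascending (at least along a subsequence), contradicting the noetherian assumption.

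To convert strictness on the level of $J_i$ into strictness on the level of $\mc{J}_i$, I would use right ampleness combined with Proposition~\ref{prop:ample sequence implies ample sheaf}. For $n \gg 0$ the coherent subsheaves $(\mc{J}_i)_n \subseteq \mc{B}_n$ are determined by their global sections, and the cohomology sequence attached to $0 \to (\mc{J}_i)_n \to \mc{B}_n \to \mc{B}_n/(\mc{J}_i)_n \to 0$ together with the vanishing guaranteed by Definition~\ref{def:  ample} yields the identification $\Gamma((\mc{J}_i)_n) = (J_i)_n$ in sufficiently high degree. Thus a stabilization $\mc{J}_N = \mc{J}_{N+k}$ would force $(J_N)_n = (J_{N+k})_n$ for all $n \gg 0$, violating the strict ascent of the $J_i$ chain (after passing, if needed, to a subsequence in which strictness is detected in arbitrarily high degrees). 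This gives the desired contradiction.

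The main obstacle is the identification $\Gamma((\mc{J}_i)_n) = (J_i)_n$ in large degree: the inclusion $\supseteq$ is automatic from the construction, but the reverse inclusion requires genuinely that no extra global sections appear in the sheaf $(\mc{J}_i)_n$ beyond those already forced by multiplying $(J_i)_m$ into higher degrees. This is exactly the content of right ampleness applied to the quotient sheaf, and it is the essential bridge between the ring-level non-noetherianity given by Theorem~\ref{thm: nonnoetherian ring} and the bimodule-algebra-level statement of the corollary.
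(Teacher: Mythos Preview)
Your overall strategy---assume $\mc{B}$ is right noetherian, then derive that $B$ is right noetherian, contradicting Theorem~\ref{thm: nonnoetherian ring}---is exactly the shape of the paper's proof. The difference is that the paper does not attempt to build the bridge by hand: it verifies that the $\mc{B}_n$ are left flat (Lemma~\ref{lem:  tensor with mc{B}_j is exact}), so that coherent right $\mc{B}$-modules form an abelian category, and then invokes \cite[Theorem~5.2]{vdb}, which under right ampleness and right noetherianity of $\mc{B}$ concludes directly that $B=\Gamma(\mc{B})$ is right noetherian.

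The gap in your argument is precisely the identification $\Gamma((\mc{J}_i)_n)=(J_i)_n$ for $n\gg 0$, and your justification for it is not correct. Right ampleness in Definition~\ref{def:  ample} is a vanishing statement for sheaves of the form $\mc{M}\otimes_{\Oh_X}\mc{B}_n$ with $\mc{M}$ a \emph{fixed} coherent $\Oh_X$-module; neither $(\mc{J}_i)_n$ nor $\mc{B}_n/(\mc{J}_i)_n$ is of this shape. The long exact sequence you write down, even granting $H^1((\mc{J}_i)_n)=0$, only tells you that $\Gamma(\mc{B}_n)\to\Gamma(\mc{B}_n/(\mc{J}_i)_n)$ is surjective; it says nothing about whether $\Gamma((\mc{J}_i)_n)$ coincides with the subspace $(J_i)_n$ you started from. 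The phenomenon you are trying to rule out---a subsheaf generated by a linear system acquiring extra global sections---is genuine and is \emph{not} excluded by cohomological vanishing alone. Establishing that no such extra sections appear, uniformly in $n$, is essentially the content of \cite[Theorem~5.2]{vdb} (and is where the noetherian hypothesis on $\mc{B}$, left flatness, and right ampleness are all used together). So rather than reproving that theorem inside this corollary, you should cite it, as the paper does.
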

\begin{proof}
Suppose $\mc{B}$ is right noetherian.
First, note that the $\mc{B}_n$ are left flat by
Lemma~\ref{lem:  tensor with mc{B}_j is exact}.
Thus the category of coherent right $\mc{B}$-modules is abelian
\cite[Corollary~3.8]{vdb}.

Therefore, all hypotheses of \cite[Theorem~5.2]{vdb} hold.
Thus $B = \Gamma(\mc{B})$ is right noetherian. But
this contradicts Theorem~\ref{thm: nonnoetherian ring}.
So $\mc{B}$ cannot be right noetherian.
\end{proof}

\section{Frobenius endomorphism on projective space}\label{sec:  frobenius example}
In this section, we examine more closely a specific case of 
a twisted homogeneous coordinate ring, 
specifically the one created from the relative Frobenius morphism $f$ (the
$k$-algebra homomorphism induced by
$x_i\mapsto x_i ^p$) on $\mathbb{P}^m _k$ where $\Char~k=p>0$ 
and the standard coordinate ring of $\mb{P}^m$ is written
$k[x_0,\ldots, x_m]$.  Note that ``relative'' means that $f$ is the identity on $k$,
so that $f$ is a $k$-morphism. This is a modification of the absolute Frobenius
morphism, in which $a \mapsto a^p$ for $a \in k$, exploiting the fact
that the Frobenius morphism is an isomorphism of a perfect field $k$.

\begin{definition}
Let $k$ be a perfect field of characteristic $p>0$.  
Let $f$ be the relative Frobenius map of $\mathbb{P}^m _k$, 
i.e., the map generated by the Frobenius 
homomorphism described by $x_i\to x_i ^p$ on $k[x_0,x_1,\ldots, x_m]$.
Let $F=\bigoplus F_n$ where 
\[
F_n
=\Gamma(\Oh(1)\otimes f^*\Oh(1)\otimes (f^2)^*\Oh(1)\otimes \cdots \otimes (f^{n-1})^*\Oh(1)).
\]
As with the standard construction, for $a\in F_i$ and $c\in F_j$, 
the multiplication of $a$ and $c$ is given by
$af^{i}(c)$.  

As $(f^j)^*\Oh(1)\cong \Oh(p^j)$, we have that 
$F_n = 
\Gamma(\Oh(1+p+p^2+\cdots +p^{n-1} ))$.
\end{definition}

%\begin{notation}
For convenience, let 
\[
e_n=1+p+p^2+\cdots + p^{n-1} =\frac{p^n-1}{p-1}.
\] 
%The sequence $e_n$ can be described by the recursive relation
%\[
%e_n=\left\{\begin{array}{ll} 1 & \text{ if } n=1\\ 
%                             p^{n-1}+e_{n-1} &\text{ if } n>1\end{array}\right.
%\]
%as well as by the recursive relation
%\[
%e_n=\left\{\begin{array}{ll} 1 & \text{ if } n=1\\ 
%                             pe_{n-1}+1 & \text{ if } n>1\end{array}\right.
%\]
%\end{notation}

In this notation, we have that $\dim_k F_n=\binom{e_n+m}{m}$, which 
 grows like $(p^m)^n$.  Hence, $\dim_k F_n$ grows exponentially with
$n$ and $F$ cannot be either left or right noetherian by 
\cite[Theorem~0.1]{sz},
just as in Theorem~\ref{thm: nonnoetherian ring}.

Moreover, $F$ is not finitely generated as a $k$-algebra, 
except in the case when both
$m=1$ and $p=2$, as we will now show.  The proof breaks into two
cases, one where $p>2$ and one where $p=2$.

Before getting to the proofs, a few words about notation.  Given any
homogeneous element $a$ of $F$, it has the usual total degree as a polynomial 
and the degree referring to which $F_j$ the element $a$ belongs.  We adopt the
following convention for this section, $\deg a$ is the total degree
of $a$ considered as a polynomial in $k[x_0,\ldots, x_m]$ and $\deg_{x_i} a$
is the exponent of $x_i$ when $a$ is written as a polynomial 
in $k[x_0,\ldots, x_m]$.  So $\deg x_0 ^{p+1}=p+1$ even though 
$x_0 ^{p+1}\in F_2$.  

Also, given two elements $a_1$ and $a_2$ in $F$, their
product as elements of $F$ will be denoted $a_1\cdot a_2$ while if
$a_1$ and $a_2$ are considered solely as polynomials, the notation
$a_1a_2$ will denote the standard multiplication of polynomials.
Similarly, given sets of elements $A_1$ and $A_2$, we define 
$A_1\cdot A_2=\{ \sum_{\text{finite}} a_1\cdot a_2|a_i\in A_i\}$ and
$A_1A_2=\{\sum_{\text{finite}} a_1a_2|a_i\in A_i\}$.

\subsection{The case \texorpdfstring{$p>2$}{p>2}}\label{subsec:  Frob, p>2, non fin gen}

First, the case when $p>2$.  To show that $F$ is not finitely generated,
we will show that any element $z$ of $F_n$ such that $\deg_{x_0} z=p^{n-1}-1$
will not be the product of elements of strictly smaller degree for
any $n>1$.  The proof is by contradiction.  So assume that there is 
some $z\in F_n$ with $\deg_{x_0}z=p^{n-1} -1$ such that $z=uv$ where
$u\in F_a$ and $v\in F_b$ for some integers $a$ and $b$ such that
$a+b=n$ and $1<a,b<n$.  (As $z$ is a monomial, $u$ and $v$ are
also monomials.)  Then $\deg_{x_0} u=c$ for some integer 
$c\leq \frac{p^a-1}{p-1}$ and $\deg_{x_0} v =d$ for some integer 
$d\leq \frac{p^b-1}{p-1}$.  By definition of the multiplication, we now
have $\deg_{x_0}(uv)=c+p^a d$.  Since $z=uv$, this means
$c+p^a d = p^{n-1}-1$.  As $a\leq n-1$, we have $c\equiv -1\pmod{p^a}$.
Since $0\leq c$, this means $c\geq p^a-1$, but as $c \leq \frac{p^a-1}{p-1}$
this is impossible for $p>2$.  Hence $z$ cannot be generated by elements
of smaller degree.

\subsection{The case \texorpdfstring{$p=2$}{p=2}}\label{subsec: Frob example, p=2}
When $m=1$, the algebra $F$ is in fact generated by $F_1$ as follows.
In this case 
\[
F_n=\Oh(\frac{2^n-1}{2-1})(X)=\Oh(2^n-1)(X)=\bigoplus_{j=0} ^{2^n-1} kx^{2^n-1-j}y^j.
\]
For any $z\in F_n$, we have $x\cdot z=xz^2$ and $y\cdot z=yz^2$ (where
the right-hand side is written as an element of $F_{n+1}$).  
Therefore $x\cdot F_n=\bigoplus_{j=0} ^{2^n-1}kx^{2^{n+1}-1-2j}y^{2j}$
and $y\cdot F_n=\bigoplus_{j=0} ^{2^n-1} kx^{2^{n+1}-2-2j}y^{2j+1}$ 
Hence, $F_1\cdot F_n=x\cdot F_n+y\cdot F_n=F_{n+1}$ and, by induction, 
$F_\ell$ must
be generated by $F_1$ for all $\ell\geq 1$.

%The details are in the next section, which discusses the special case
%when $m=1$ for any positive characteristic.  

However, when $m\geq 2$, the algebra $F$ is not finitely generated.
In this case, we will show that the element 
\[
z=x_0 ^{2^{n-1}-1} x_1 ^{2i+1} x_2 ^{2j+1} \in F_n
\]
is not generated by elements of smaller degree for any nonnegative 
integers $i$ and $j$ (here $2i+1+2j+1=2^{n-1}$).  As before, the proof
proceeds by contradiction.  Suppose that 
$z=uv$ for some $u\in F_r$ and $v\in F_s$ where $r+s=n$ and $1<r,s<n$.  
Since $z$ is a monomial only
in $x_0, x_1$ and $x_2$, so are $u$ and $v$.  Write $u=x_0 ^a x_1 ^b x_2 ^c$
and $v=x_0 ^d x_1 ^e x_2 ^f$; here $a+b+c=2^r-1$ and $d+e+f=2^s-1$.  
By definition of the multiplication
$uv = x_0 ^{2^r d +a} x_1 ^{2^r e +b} x_2 ^{2^r f+c}$.  Since $z=uv$,
this means $2^r d+a=2^{n-1}-1$, $b+2^r e = 2i+1$ and $c+2^r g =2j+1$.  
Then as $r\leq n-1$, this means $a\equiv -1\pmod{2^r}$.
Thus $a=2^r g-1$ for some positive integer $g$.
Then $2^r -1 = a+b+c=2^r g-1 +b+c$; the only way this is possible is
for $b=c=0$ and $a=2^r-1$.  Now, the other two equations become
$2^r e = 2i+1$ and $2^r f =2j+1$, neither of which is possible.   
Hence $z$ cannot be generated by elements of smaller degree.

Summing up our work, we have the following.
\begin{proposition}\label{prop: Frobenius example}
Let $X=\mb{P}^m$ over a perfect field of characteristic $p > 0$. 
Let $f$ be the relative Frobenius endomorphism. 
Then the twisted homogeneous coordinate ring 
$F = \oplus \Gamma\left(\Oh\left(\frac{p^n-1}{p-1}\right)\right)$
is not finitely generated, unless $p=2$ and $m=1$. In that case, $F$ is generated
by $F_1 = \Gamma(\Oh(1))$, but $F$ is not noetherian. \qed
\end{proposition}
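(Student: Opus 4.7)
The plan is to assemble the three assertions directly from the two preceding subsections together with a growth-of-Hilbert-function computation, proceeding in three stages: non-noetherianity, non-finite generation outside the exceptional case, and generation by $F_1$ in the exceptional case.

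First I would dispose of the non-noetherian claim uniformly. Since $F_n = \Gamma(\Oh(e_n))$ with $e_n = (p^n - 1)/(p-1)$, we have $\dim_k F_n = \binom{e_n + m}{m}$, which grows like $p^{mn}$, hence exponentially in $n$ for every choice of $p$ and $m$ (including $p=2, m=1$, where the growth is $e_n + 1 = 2^n$). By \cite[Theorem~0.1]{sz}, a finitely-presented graded $k$-algebra with exponential growth cannot be noetherian on either side, so $F$ is neither left nor right noetherian.

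Next I would prove non-finite generation whenever $(p,m) \neq (2,1)$ by producing, for each $n \geq 2$, a monomial $z \in F_n$ that admits no decomposition $z = u \cdot v$ with $u \in F_a$, $v \in F_b$, $a+b=n$, $1 < a,b < n$. Both $u,v$ must then be monomials, and the twisted multiplication gives $u \cdot v = u \, v^{p^a}$ as ordinary polynomials, so $\deg_{x_i}(u \cdot v) = \deg_{x_i}(u) + p^{a}\deg_{x_i}(v)$. For $p > 2$, the monomial $z$ with $\deg_{x_0}(z) = p^{n-1} - 1$ forces a relation $c + p^a d = p^{n-1} - 1$ where $0 \leq c \leq e_a$, hence $c \equiv -1 \pmod{p^a}$ and thus $c \geq p^a - 1$; but $e_a = (p^a - 1)/(p-1) < p^a - 1$ when $p > 2$, a contradiction. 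For $p = 2$ with $m \geq 2$, the bound $e_a = 2^a - 1$ collapses to exactly $p^a - 1$, so the $x_0$-degree alone no longer forbids a decomposition; I would instead test $z = x_0^{2^{n-1}-1} x_1^{2i+1} x_2^{2j+1}$, whose $x_0$-degree forces $\deg_{x_0}(u) = 2^a - 1$ and hence $u = x_0^{2^a - 1}$ (a pure power of $x_0$), which is incompatible with the required odd $x_1$- and $x_2$-exponents of $z$.

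Finally, for the exceptional case $p = 2, m = 1$, I would show $F_1 \cdot F_n = F_{n+1}$ by direct monomial computation, using that the twisted multiplication sends $z \in F_n$ to $x \cdot z = x z^2$ and $y \cdot z = y z^2$; then $x \cdot F_n$ covers all monomials in $F_{n+1}$ of even $x$-degree and $y \cdot F_n$ covers those of odd $x$-degree, so their sum is $F_{n+1}$, and induction gives $F = k[F_1]$. The main obstacle is the $p=2, m \geq 2$ case, where the clean congruence argument that handles $p > 2$ breaks down and one must encode auxiliary information into the test monomial via the extra variables $x_1, x_2$; everything else is either a direct monomial calculation or a citation.
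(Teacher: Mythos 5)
Your proposal is correct and takes essentially the same route as the paper's: exponential growth of $\dim_k F_n$ together with the Stephenson--Zhang theorem for non-noetherianity, the monomial with $\deg_{x_0} z = p^{n-1}-1$ for $p>2$, the test monomial $x_0^{2^{n-1}-1}x_1^{2i+1}x_2^{2j+1}$ for $p=2$, $m\geq 2$, and the parity decomposition $F_1\cdot F_n = F_{n+1}$ for $p=2$, $m=1$. (One cosmetic slip: $x\cdot F_n$ consists of the monomials of \emph{odd} $x$-degree and $y\cdot F_n$ of even $x$-degree, the reverse of your labeling, which does not affect the argument.)
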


\section*{References}

%\bibliography{mybibfile}

%\section{References}

\end{document}